\newcommand{\C}{{\mathbb C}}
\newcommand{\F}{{\mathbb F}}
\newcommand{\PP}{{\mathbb P}}
\newcommand{\Q}{{\mathbb Q}}
\newcommand{\T}{{\mathbb T}}
\newcommand{\Z}{{\mathbb Z}}
\newcommand{\Qbar}{{\overline{\Q}}}
\newcommand{\kbar}{{\overline{k}}}
\newcommand{\Kbar}{{\overline{K}}}
\newcommand{\isom}{\simeq}
\DeclareMathOperator{\id}{id}
\DeclareMathOperator{\rank}{rank}
\DeclareMathOperator{\lcm}{lcm}
\DeclareMathOperator{\End}{End}
\DeclareMathOperator{\GL}{GL}
\newtheorem{theorem}{Theorem}[section]
\newtheorem{proposition}[theorem]{Proposition}
\newtheorem{lemma}[theorem]{Lemma}
\newtheorem{corollary}[theorem]{Corollary}
\newtheorem{conjecture}[theorem]{Conjecture}
\theoremstyle{definition}
\newtheorem{definition}[theorem]{Definition}
\newtheorem{example}[theorem]{Example}
\newtheorem*{acknowledgements}{Acknowledgements}
\theoremstyle{remark}
\crefname{theorem}{Theorem}{Theorems}
\crefname{proposition}{Proposition}{Propositions}
\crefname{lemma}{Lemma}{Lemmae}
\crefname{corollary}{Corollary}{Corollaries}
\crefname{conjecture}{Conjecture}{Conjectures}
\crefname{definition}{Definition}{Definitions}
\crefname{example}{Example}{Examples}
\crefname{equation}{the equation}{the equations}
\Crefname{equation}{The equation}{The equations}
\title{Determination of the modular Jacobian varieties $J_1(M,MN)$ with the Mordell--Weil rank zero}
\author{Koji Matsuda\thanks{The University of Tokyo}}
\date{}
\begin{document}

\maketitle

\begin{abstract}
In this paper, we determine all modular Jacobian varieties $J_1(M,MN)$ over the cyclotomic fields $\Q(\zeta_M)$ with the Mordell--Weil
rank zero assuming the Birch--Swinnerton-Dyer conjecture,
following the method of Derickx, Etropolski, van Hoeij, Morrow, and Zureick-Brown.
\end{abstract}

\section{Introduction}

The possible torsion groups of Mordell--Weil groups of elliptic curves over $\Q$ are completely classified
by Mazur \cite{Mazur}.
By generalizing Mazur's method, the possible torsion groups over quadratic fields are also classified
by Kenku and Momose \cite{KenkuMomose} and by Kamienny \cite{Kamienny}.
Very recently a corresponding theorem for cubic fields is proven \cite{DEvHMZB},
but the higher degree cases are still unknown.

These are proven by considering the corresponding modular curves.
More precisely, the existence of an elliptic curve with certain torsion points
is essentially equivalent to the existence of certain rational points of the modular curve.
Hence this kind of problem leads us to studying the rational points of modular curves.

On the other hand, in general the set of rational points of a curve is related to its Jacobian variety.
For example, if the Mordell--Weil rank of the Jacobian variety is zero,
then considering the Riemann--Roch spaces of divisors,
we can determine all rational points of the curve by finite steps, at least in theory.

This observation leads us to considering the problem of determining the curves $X_1(M,MN)$
(for the definition see below) whose Jacobian varieties have the Mordell--Weil ranks zero.
Following the method of \cite[Theorem 3.1]{DEvHMZB}, in this paper
we show the following (the cases of $M = 1$ and $2$ are taken from \cite[Theorem 3.1]{DEvHMZB}):

\begin{theorem}[Main theorem] \label{main_theorem}
The rank of $J_1(M,MN)(\Q(\zeta_M))$ is zero if  $(M,N)$ is in the following list:
\begin{enumerate}[$M=1,$]
\item $N \in \{1,\dots,36,38,\dots,42,44,\dots,52,54,55,56,59,60,62,64,66,\\
68,\dots,72,75,76,78,81,84,87,90,94,96,98,100,108,110,119,\\
120,132,140,150,168,180\},$
\item $N \in  \{ 1,\dots,21,24,\dots,27,30,33,35,42,45 \},$
\item $N \in  \{ 1,\dots,10,12,14,16,20 \},$
\item $N \in  \{ 1,\dots,6 \},$
\item $N \in  \{ 1,\dots,4,6 \},$
\item $N \in  \{ 1,\dots,5 \},$
\item $N \in  \{ 1,2 \},$
\item $N = 1$
\item $N = 1$
\item $N = 1$
\setcounter{enumi}{11}
\item $N = 1$.
\end{enumerate}
If the Birch--Swinnerton-Dyer conjecture is true, then the converse holds.
\end{theorem}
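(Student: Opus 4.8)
The plan is to compute, for each pair $(M,N)$, the Mordell--Weil rank of the Jacobian $J_1(M,MN)$ over the cyclotomic field $\Q(\zeta_M)$, and to show that the rank is positive for every $(M,N)$ \emph{not} in the list. Because $X_1(M,MN)$ is a modular curve, its Jacobian decomposes up to isogeny into a product of abelian varieties $A_f$ attached to newforms $f$ of the relevant weight $2$ level and character, by the Eichler--Shimura theory. The first step is therefore to make this decomposition explicit over $\Q(\zeta_M)$: one identifies which Galois-conjugacy classes of newforms contribute to $J_1(M,MN)$, keeping track of the Nebentypus and of how base change to $\Q(\zeta_M)$ splits the factors. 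The key arithmetic input is that the rank of $A_f(\Q(\zeta_M))$ is governed, under BSD, by the order of vanishing at $s=1$ of the $L$-functions $L(f\otimes\chi, s)$ as $\chi$ ranges over the Dirichlet characters cutting out $\Q(\zeta_M)$; and, unconditionally in the direction we need for the rank-zero half, by the Kolyvagin--Logachev and Kato results which give $\rank A_f(\Q(\zeta_M))=0$ whenever the relevant $L$-values are nonzero.

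The proof then splits into two halves matching the ``if and only if'' of the statement. For the forward direction (the listed pairs do give rank zero), I would follow \cite[Theorem 3.1]{DEvHMZB}: for each listed $(M,N)$ one verifies that every newform factor $A_f$ appearing in $J_1(M,MN)$ over $\Q(\zeta_M)$ has nonvanishing twisted central $L$-value, so that the analytic rank is zero, and then invokes the theorem of Kato (together with Kolyvagin--Logachev for the abelian-variety factors) to conclude that the algebraic rank is zero \emph{unconditionally}. This is why the listed cases can be asserted without assuming BSD. In practice this is a finite computation: one enumerates the newforms via modular symbols, computes the winding element or the special $L$-values numerically to sufficient precision to certify nonvanishing, and records the result. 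For the converse (every \emph{un}listed pair has positive rank), one instead exhibits, for each such $(M,N)$, at least one newform factor $A_f$ whose twisted $L$-function vanishes at the center, so that the analytic rank is positive; here one must appeal to BSD to transfer positive analytic rank to positive Mordell--Weil rank, which is exactly the conditional hypothesis in the theorem.

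A structural point that makes the argument finite is monotonicity: if $X_1(M,MN)$ dominates $X_1(M',M'N')$ (equivalently if $(M',M'N')\mid(M,MN)$ in the appropriate sense), then a rank-positive factor of $J_1(M',M'N')$ pulls back to a rank-positive factor of $J_1(M,MN)$. Consequently it suffices to identify a finite set of minimal rank-positive levels and to check that every unlisted pair lies above one of them, while every listed pair lies above none. So the main steps, in order, are: (i) set up the newform decomposition of $J_1(M,MN)$ over $\Q(\zeta_M)$ with correct multiplicities and characters; (ii) establish the monotonicity/pullback principle to reduce to finitely many levels; (iii) for the rank-zero cases, certify nonvanishing of all relevant twisted $L$-values and apply Kato--Kolyvagin--Logachev to get rank zero unconditionally; and (iv) for the rank-positive cases, exhibit a vanishing twisted $L$-value and apply BSD.

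The hardest part, I expect, is step (i) combined with the bookkeeping in step (iv). Getting the newform decomposition right over the cyclotomic base $\Q(\zeta_M)$—rather than over $\Q$—requires careful tracking of how each $A_f$ base-changes, since the rank over $\Q(\zeta_M)$ sees \emph{all} the character twists $f\otimes\chi$ simultaneously, and a factor that has rank zero over $\Q$ may acquire rank after base change (and vice versa). Ensuring that the nonvanishing certificates in step (iii) cover every twist, and that the single vanishing $L$-value produced in step (iv) genuinely lies in a factor defined over $\Q(\zeta_M)$ and is not killed by a sign or multiplicity subtlety, is where the delicate analysis lies; the numerical $L$-value computations themselves, while voluminous, are routine once the decomposition is pinned down.
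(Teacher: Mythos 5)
Your plan follows essentially the same route as the paper: decompose the Jacobian into newform factors, reduce the rank over $\Q(\zeta_M)$ to ranks of the character twists $A_{f_\chi}$ over $\Q$ (via Weil restriction and Faltings' isogeny theorem), apply Kato's theorem for the unconditional rank-zero direction and BSD for the converse, and use domination of modular curves to make the verification finite. The only notable difference is organizational: the paper realizes $X_1(M,MN)$ as the base change of a curve $X_\Delta$ over $\Q$, observes that the relevant set of newforms is stable under twisting by characters modulo $M$, and sandwiches $J_\Delta$ between $J_1(M^2N)$ and $J_1(MN)\times J_0(M^2N)$ so that the rank-zero lists already proved in \cite{DEvHMZB} settle all but seven pairs---only those are checked by a direct $L$-value computation---whereas you propose carrying out the $L$-value computations wholesale.
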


Note that in \cite{DEvHMZB} the authors claim that the converse of this theorem for $M = 1,2$ also holds unconditionally.
In its proof they use the converse of Kato's theorem \cite[Corollary 14.3]{Kato},
which is, according to a communication with the authors of the article, still an open problem,
and hence the proof is incomplete.

Using this theorem we classify elliptic curves with certain kind of rational torsion points over cyclotomic fields.

In the final section, by generalizing the proof of \cref{main_theorem},
we prove a statement about the rank of $J_1(M,MN)(\Q(\zeta_M))$,
and compute it concretely for some $(M,N)$ which are not on the list in \cref{main_theorem}.
Also, assuming the Birch--Swinnerton-Dyer conjecture, we give a lower bound of the rank of $J_0(N)(\Q)$.

\section{Preliminaries}

Throughout this paper, $N$ and $M$ stand for positive integers unless otherwise stated.

For a subgroup $\Gamma$ of $\GL_2(\Z/N)$,
let $X_\Gamma$ denote the modular curve over $\Z[1/N][\zeta_N]^{\det \Gamma}$ corresponding to $\Gamma$.
The space $X_\Gamma$ is a smooth proper scheme of relative dimension $1$ with geometrically connected fibers.
Let $J_\Gamma$ denote its Jacobian variety.

For particular subgroups
\begin{equation*}
\Gamma_0(N) = 
\left\{ \left( \begin{matrix}a & b \\ 0 & d \end{matrix} \right) \in \GL_2 (\Z/N) \right\},
\end{equation*}
and
\begin{equation*}
\Gamma_1(M,MN) = 
\left\{ \left( \begin{matrix}1 & b \\ 0 & d \end{matrix} \right) \in \GL_2 (\Z/MN) \bigg |
b \equiv 0, d \equiv 1 \mod M \right\}
\end{equation*}
respectively, we denote the curve $X_\Gamma$ by $X_0(N)$ and $X_1(M,MN)$ respectively,
and denote the curve $X_1(1,N)$ and $X_1(N,N)$ by $X_1(N)$ and by $X(N)$ respectively.
The curve $X_1(M,MN)$, which is defined over $\Z[1/MN][\zeta_M]$,
is the modular curve parametrizing elliptic curves and their independent two rational points of 
orders $M$ and $NM$.
Moreover we denote similarly for their Jacobian varieties.

There is a canonical morphism $X_1(N) \to X_0(N)$, and it has the automorphism group isomorphic to
$(\Z/N)^* / \{ \pm 1 \}$.

Moreover for a subgroup $\Delta$ of $(\Z/N)^* / \{ \pm 1 \}$,
we denote the curve $X_1(N)/\Delta$ by $X_\Delta$,
and $J_\Delta$ its Jacobian variety.
The curve $X_\Delta$ is isomorphic to $X_{\Gamma_\Delta}$ for the group
\begin{equation*}
\Gamma_\Delta = 
\left\{ \left( \begin{matrix}a & b \\ 0 & d \end{matrix} \right) \in \GL_2 (\Z/N) \bigg | a \mod \pm 1 \in \Delta  \right\}.
\end{equation*}

For a normalized eigenform $f$ of weight $2$ of level $\Gamma_1(N)$,
let $K_f$ be the number field generated by the Fourier coefficients of $f$,
and let $A_f$ denote the abelian variety associated to $f$.
It is of dimension $[K_f : \Q]$ and an order of $K_f$ acts on it (\cite[Theorem 1]{Shimura}).
Then for a prime number $l$, since the homology group $H_1(X_1(N)(\C), \Q)$ is free of rank $2$ over $\T \otimes_\Z \Q$,
we have that the Tate module $V_l(A_f) = T_l(A_f) \otimes_{\Z_l} \Q_l$ is free of rank $2$ over $K_f \otimes_\Q \Q_l$,
and by the construction, for a prime $p \ne l$ satisfying that $p \nmid N$,
the trace of a $p$-th arithmetic Frobenius on  $V_l(A_f)$ is $a_p(f)$.
Moreover, if $f$ is a newform, then the modular abelian variety $A_f$ is simple
and $\operatorname{End} A_f \otimes_\Z \Q = K_f$ (\cite[Corollary 4.2]{Ribet}).
Moreover, for a subgroup $\Delta$ of $(\Z/N)^* / \{ \pm 1 \}$,
the modular Jacobian variety $J_\Delta$ is isogenous to the product
\begin{equation} \label{eq:simple_decomposition}
\bigoplus_M \bigoplus_f A_f^{m_f},
\end{equation}
where $M$ runs over positive divisors of $N$,
$f$ runs over the set of Galois conjugacy classes of newforms of level $M$
whose characters $\epsilon$ satisfy that $\epsilon(a) = 1$ for all $a \in \Z/M$ with $a \mod \pm \in \Delta \mod M$,
and $m_f = \sigma_0(N/M)$ is the number of positive divisors of $N/M$ (\cite[Proposition 2.3]{Ribet}).

For the ranks of Mordell--Weil groups of abelian varieties, Birch and Swinnerton-Dyer conjectured the following:

\begin{conjecture}[Birch--Swinnerton-Dyer conjecture] \label{BSD}
Let $A$ be an abelian variety over $\Q$, and assume that the L-function $L(s)$ of $A$ has an analytic
continuation to $\C$.
Then the order of zero of $L(s)$ at $s = 1$ and the rank of the Mordell--Weil group $A(\Q)$ are the same.
\end{conjecture}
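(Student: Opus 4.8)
The plan is complicated by the fact that the final statement is the Birch--Swinnerton-Dyer conjecture itself, one of the central open problems of arithmetic geometry: no unconditional proof is known in the generality stated here, so what follows describes the route the known partial results take rather than a complete argument. The natural first move is to reduce from a general abelian variety $A/\Q$ to its simple isogeny factors, since both quantities are additive over an isogeny decomposition. Indeed, $L(s)$ depends only on the isogeny class and factors as a product over the factors, so $\operatorname{ord}_{s=1} L(s)$ is additive; and an isogeny $A \sim \bigoplus_i A_i$ induces an isomorphism $A(\Q) \otimes \Q \isom \bigoplus_i A_i(\Q) \otimes \Q$, so $\rank A(\Q)$ is additive as well. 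In the situation relevant to this paper the factors are the modular abelian varieties $A_f$ appearing in \eqref{eq:simple_decomposition}, for which the analytic continuation of $L(s)$ is automatic and the Galois action on $V_l(A_f)$ is understood, so one may hope to work one eigenform at a time.

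For each such factor I would attack the two inequalities between the analytic and algebraic ranks separately. The inequality $\rank A_f(\Q) \le \operatorname{ord}_{s=1} L(s)$ is the direction accessible by Euler system methods: one uses Kato's Euler system attached to the Tate module of the modular form $f$, which yields in particular that $L(f,1) \ne 0$ forces the relevant Selmer group, and hence the Mordell--Weil rank, to vanish. The reverse inequality, together with the exact equality in higher rank, is where the Heegner-point construction of Gross--Zagier combined with Kolyvagin's Euler system supplies the only unconditional results, and only when the analytic rank is at most $1$: there one produces a point of infinite order precisely when $L(s)$ vanishes to order one, and bounds the Selmer group in terms of that Heegner point.

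The main obstacle is a genuine barrier rather than a technical one. No construction is known that controls $A_f(\Q)$ when $\operatorname{ord}_{s=1} L(s) \ge 2$, nor is there any general mechanism forcing $L(s)$ to vanish once the rank is positive; the latter is exactly the converse of Kato's theorem, which, as noted in the introduction, remains open. A complete proof would require either a higher-rank analogue of the Heegner-point and Euler-system machinery or a decisive input from the Iwasawa main conjecture linking the $p$-adic $L$-function to the Selmer group in all ranks, and both lie beyond current techniques. For this reason the present paper, following \cite{DEvHMZB}, invokes the conjecture only as a hypothesis for the converse direction of \cref{main_theorem} rather than attempting to establish it.
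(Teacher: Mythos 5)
You correctly identify that the statement is the Birch--Swinnerton-Dyer conjecture itself, which the paper states without any proof and invokes only as a hypothesis (in fact, the paper explicitly needs only the converse of Kato's theorem, i.e.\ the analytic-rank-zero case), and your survey of the partial results --- Kato's Euler system for the direction used in \cref{crucial}, Gross--Zagier and Kolyvagin in analytic rank at most one --- is accurate and matches how the paper deploys these inputs. Since no proof exists and the paper offers none, your honest assessment that this is a genuine open barrier is exactly the right conclusion.
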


Kato (\cite[Corollary 14.3]{Kato}) proves one direction of its special case:
Namely, for a normalized eigenform $f$ of weight $2$ of level $\Gamma_1(N)$,
if the order of the zero of the L-function $L(f,s)$ at $s = 1$ is zero,
then the rank of $A_f(\Q)$ is zero.
In the proof of \cite[Theorem 3.1]{DEvHMZB} the authors use the converse,
which is, according to a communication with the authors of the article, still an open problem,
and hence the proof is incomplete.
For the conditional results in this paper, we assume its converse.

For an abelian extension $K/\Q$ with the Galois group $G$,
we identify the characters $\chi : G \to \Qbar^* $
with the Dirichlet characters corresponding to it.

For a Dirichlet character $\chi$ of conductor $M$ and for a modular form
$f = \sum_n a_n q^n$ of weight $k$ of level $\Gamma_1(N)$ with the character $\varphi$ of conductor $N'$,
we denote by $f_\chi$ the twist $\sum_n \chi(n) a_n q^n$ of the modular form $f$.
This is a modular form of weight $k$ of level $\Gamma_1(N'')$ with the character $\chi^2 \varphi$
of conductor dividing $\lcm(M, N')$,
where $N'' = \lcm(N, N'M, M^2)$.
(\cite[Proposition 3.1.]{AtkinLi})
Note that, by checking their Fourier coefficients, we have that
if a modular form $f$ is a normalized eigenform, then so is the twist $f_\chi$.

Let $f$ be a normalized eigenform of weight $2$ of level $\Gamma_1(N)$.
Then there exists a newform $g$ of level $M$ dividing $N$ such that $a_p(f) = a_p(g)$ for every prime $p$ not dividing $N/M$.
Such $g$ is unique by the multiplicity one theorem.
We call such a newform the newform associated with $f$, and denote it by $f^{\operatorname{new}}$.
In this case, since the number field $K_g$ is generated by $a_p(g)$ for every $p$ not dividing $N$,
the number field $K_f$ contains $K_g$.

Let $\Delta$ be the group
\begin{equation*}
\ker ( (\Z/M^2N)^* / \pm 1 \to (\Z/MN)^* / \pm 1).
\end{equation*}
Let $NF$ denote the set of newforms of weight $2$, level dividing $M^2 N$, and conductor dividing $MN$,
and let $\overline{NF}$ be the set of Galois conjugacy classes of $NF$.
Let $D$ be the group of Dirichlet characters modulo $M$.
These play a crucial role in this paper.

\begin{lemma} \label{decomposition_J1MN}
The modular Jacobian variety $J_\Delta$ is isogenous to the product
\begin{equation*}
\bigoplus_{G_\Q f \in \overline{NF}} A_f^{\sigma_0(M^2N/N_f)},
\end{equation*}
where $N_f$ is the level of $f$ and $\sigma_0(M^2N/N_f)$ is the number of positive divisors of $M^2N/N_f$ as in preliminaries.
\end{lemma}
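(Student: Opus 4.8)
The plan is to obtain this as a special case of the general decomposition~\eqref{eq:simple_decomposition} applied with level $M^2N$. Since $\Delta$ is a subgroup of $(\Z/M^2N)^*/\{\pm 1\}$, that decomposition shows that $J_\Delta$ is isogenous to $\bigoplus_{d \mid M^2N} \bigoplus_f A_f^{\sigma_0(M^2N/d)}$, where for each divisor $d$ of $M^2N$ the inner sum ranges over the Galois conjugacy classes of weight-$2$ newforms $f$ of level $d$ whose nebentypus $\epsilon$ is trivial on the image of $\Delta$ in $(\Z/d)^*/\{\pm 1\}$. Because every weight-$2$ newform has a well-defined level $N_f$ and $\sigma_0(M^2N/d) = \sigma_0(M^2N/N_f)$ when $d = N_f$, summing first over $d \mid M^2N$ and then over newforms of exact level $d$ is the same as summing once over all such newforms. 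Thus the whole content of the lemma is to check that the nebentypus condition singles out exactly the set $\overline{NF}$; equivalently, that a weight-$2$ newform of level $d \mid M^2N$ satisfies the condition if and only if the conductor of its nebentypus divides $MN$.

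This last point is a short character-theoretic computation, which I expect to be the crux. Since $f$ has weight $2$, its nebentypus satisfies $\epsilon(-1) = 1$, so $\epsilon$ factors through $(\Z/d)^*/\{\pm 1\}$ and its triviality on the image of $\Delta$ is equivalent to the triviality of the lifted character $\bar\epsilon$ of $(\Z/M^2N)^*$ on the preimage $\widetilde\Delta = \{\, a \in (\Z/M^2N)^* : a \equiv \pm 1 \pmod{MN} \,\}$ of $\Delta$. As $\bar\epsilon(-1) = 1$, this is in turn equivalent to the triviality of $\bar\epsilon$ on the subgroup $\ker\big((\Z/M^2N)^* \to (\Z/MN)^*\big) = \{\, a : a \equiv 1 \pmod{MN} \,\}$.

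I would then invoke the standard fact that a Dirichlet character is trivial on $\ker\big((\Z/M^2N)^* \to (\Z/MN)^*\big)$ precisely when it factors through $(\Z/MN)^*$, that is, precisely when its conductor divides $MN$; since passing from $\epsilon$ to its lift $\bar\epsilon$ leaves the conductor unchanged, this gives exactly the desired equivalence. Hence the newforms surviving in the decomposition are those of weight $2$, level dividing $M^2N$, and nebentypus conductor dividing $MN$, which is the set $NF$. Finally, this conductor condition is stable under the action of $G_\Q$, since conjugate newforms have conjugate nebentypus of equal conductor, so the index set descends to the Galois conjugacy classes $\overline{NF}$ and the multiplicity $\sigma_0(M^2N/N_f)$ is constant within an orbit; this produces the asserted isogeny.
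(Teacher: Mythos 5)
Your proposal is correct and follows essentially the same route as the paper: the paper's proof likewise reduces, via the character-eigenspace decomposition of $S_2(\Gamma_1(M^2N))$ underlying \eqref{eq:simple_decomposition}, to the key equivalence that a form of character $\epsilon$ is of level $\Gamma_\Delta$ (equivalently, $\epsilon$ is trivial on $\Delta$) exactly when the conductor of $\epsilon$ divides $MN$. The only difference is presentational: the paper phrases this as the equality $S_2(\Gamma_\Delta)=\oplus_\epsilon S_2(\Gamma_1(M^2N),\epsilon)$ over characters of conductor dividing $MN$ and dismisses the character computation with ``by the definition of $\Delta$,'' whereas you invoke \eqref{eq:simple_decomposition} directly and spell out that computation (evenness of $\epsilon$, the preimage of $\Delta$, and the conductor criterion) in full.
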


\begin{proof}
We show the statement by showing that the space $S_2(\Gamma_\Delta)$ of the cusp forms of level $\Gamma_\Delta$ is
equal to the direct product $\oplus_\epsilon S_2(\Gamma_1(M^2N), \epsilon)$,
where $\epsilon$ runs over the Dirichlet characters of conductor dividing $MN$.
The space $S_2(\Gamma_\Delta)$ is a subspace of $S_2(\Gamma_1(M^2N))$,
and the later space is the direct product $\oplus S_2(\Gamma_1(M^2N), \epsilon)$,
where the sum is taken over all characters $\epsilon$ mod $M^2N$.
Now by the definition of $\Delta$, for $f \in S_2(\Gamma_1(M^2N), \epsilon)$,
we have that $f$ is of level $\Gamma_\Delta$ if and only if the character $\epsilon$ is of conductor dividing $MN$.
\end{proof}

\begin{lemma} \label{twist_of_NF}
Sending a pair $(\chi, f)$, where $\chi \in D$ and $f \in NF$, to $(f_\chi)^{\operatorname{new}}$
defines a well-defined action of the group $D$ on the set $NF$ from right.
\end{lemma}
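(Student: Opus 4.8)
The plan is to establish three things: that $(f_\chi)^{\operatorname{new}}$ again lies in $NF$, that the trivial character acts as the identity, and that $\bigl((f_{\chi_1})^{\operatorname{new}}\bigr)_{\chi_2}^{\operatorname{new}}=(f_{\chi_1\chi_2})^{\operatorname{new}}$ for all $\chi_1,\chi_2\in D$, which is precisely the right-action compatibility. The only ingredients needed are the twisting formula for modular forms recalled above from \cite{AtkinLi}, the remark that the twist of a normalized eigenform is again a normalized eigenform (so that $(-)^{\operatorname{new}}$ may be applied), and the multiplicity one theorem that makes $f\mapsto f^{\operatorname{new}}$ well defined.

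First I would verify closure. Let $f\in NF$ have level $N_f\mid M^2N$ and nebentypus $\epsilon_f$ of conductor $N'\mid MN$, and let $\chi\in D$, so that the conductor of $\chi$ divides $M$. By the twisting formula, $f_\chi$ is a weight $2$ normalized eigenform whose level divides $N''=\lcm(N_f,N'M,M^2)$ and whose character $\chi^2\epsilon_f$ has conductor dividing $\lcm(M,N')$. From $N_f\mid M^2N$, $N'M\mid M^2N$, and $M^2\mid M^2N$ we obtain $N''\mid M^2N$, and from $M\mid MN$ together with $N'\mid MN$ we obtain $\lcm(M,N')\mid MN$. The newform $(f_\chi)^{\operatorname{new}}$ has weight $2$, its level divides the level of $f_\chi$ and hence divides $N''$, and the conductor of its character divides that of $\chi^2\epsilon_f$; hence both defining bounds of $NF$ are inherited and $(f_\chi)^{\operatorname{new}}\in NF$. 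This exhibits the assignment as a map $D\times NF\to NF$.

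The identity axiom is immediate, since $f_{1}=f$ and $f$ is already a newform, so $(f_1)^{\operatorname{new}}=f$. For compatibility set $g=(f_{\chi_1})^{\operatorname{new}}$. By the definition of $(-)^{\operatorname{new}}$ one has $a_p(g)=a_p(f_{\chi_1})=\chi_1(p)\,a_p(f)$ for all primes $p$ outside a finite set, and twisting once more gives $a_p(g_{\chi_2})=\chi_2(p)\,a_p(g)=\chi_1(p)\chi_2(p)\,a_p(f)=(\chi_1\chi_2)(p)\,a_p(f)=a_p(f_{\chi_1\chi_2})$ for all but finitely many $p$. Thus the two newforms $(g_{\chi_2})^{\operatorname{new}}$ and $(f_{\chi_1\chi_2})^{\operatorname{new}}$ share the same $a_p$ for almost all primes $p$, so by multiplicity one they coincide; as $D$ is abelian this is exactly the right-action law.

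The computations involved are short, and I expect the only step needing genuine care to be the closure argument, where one must control how twisting inflates both the level and the conductor of the nebentypus and confirm that each stays within the bounds defining $NF$. The compatibility is the conceptual heart of the statement, but it reduces cleanly to comparing Hecke eigenvalues away from the finitely many bad primes and invoking multiplicity one, so I anticipate no real obstacle there.
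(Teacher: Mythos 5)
Your closure argument coincides with the paper's own proof, which consists of exactly that step: by the Atkin--Li formula recalled in the preliminaries, $f_\chi$ is a normalized eigenform of level dividing $\lcm(M^2N, MN\cdot M, M^2)=M^2N$ with character of conductor dividing $\lcm(MN,M)=MN$, whence $(f_\chi)^{\operatorname{new}}\in NF$. The paper stops there, reading ``well-definedness'' as just this closure statement; your additional verification of the action axioms, by comparing Hecke eigenvalues at almost all primes and invoking multiplicity one, is correct and is not idle pedantry: when the lemma is applied in the proof of \cref{computing/Qzeta}, what is actually used is that $f\mapsto(f_\nu)^{\operatorname{new}}$ is a \emph{bijection} of $NF$, and bijectivity is precisely what the action axioms deliver (the inverse map being twisting by $\nu^{-1}$ followed by passage to the associated newform).

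One step of yours needs repair: the identity axiom is not ``immediate'' in the way you claim, because $f_{\chi_0}\ne f$ in general for the principal character $\chi_0$ modulo $M$. With the paper's definition, $f_{\chi_0}=\sum_n\chi_0(n)a_nq^n$ kills every coefficient $a_n$ with $\gcd(n,M)>1$; for instance, for $M=11$, $N=1$ and $f$ the level-$11$ newform (which lies in $NF$), one has $a_{11}(f)=1\ne 0=a_{11}(f_{\chi_0})$. The conclusion $(f_{\chi_0})^{\operatorname{new}}=f$ is still true, and its proof is the very argument you use for compatibility: $a_p(f_{\chi_0})=a_p(f)$ for all primes $p\nmid M$, and $f$ is itself a newform, so multiplicity one forces $(f_{\chi_0})^{\operatorname{new}}=f$. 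With that one-line fix your proof is complete, and strictly more thorough than the paper's.
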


\begin{proof}
What we need to show is the well-definedness of the action.
Let $\chi \in D$ and let $f \in NF$.
Then as we remarked in preliminaries, the twist $f_\chi$ is a normalized eigenform of level $\lcm(M^2N, MNM, M^2) = M^2N$
with the character of conductor dividing $\lcm(MN, M) = MN$.
Thus its associated newform $(f_\chi)^{\operatorname{new}}$ lies in $NF$.
This shows the well-definedness.
\end{proof}

\section{Proof of the main theorem}

In order to determine whether the Mordell--Weil rank of $J_1(M,MN)$ is zero or not, we use the following
corollary of the theorem of Kato:
This seems to be well-known, but for the luck of reference we give a proof.

\begin{lemma}
Let $L/K$ be a finite abelian extension of number fields with the Galois group $G$,
$F$ a number field containing all $\exp(G)$-th roots of unity,
and for every character $\chi : G \to \C^*$, let $A_\chi$ be an abelian variety over $K$
on which an order of $F$ acts.
Assume that there exists a prime $l$ such that the Galois modules
$V_l(A_\chi)$ and $\chi \otimes_{\Q_l} V_l(A_1)$ over $F \otimes_\Q \Q_l$ are isomorphic for all $\chi$.
Let $B$ be the Weil restriction of $(A_1)_L$ along with the extension $L/K$.
Then $B$ is isogenous to $\bigoplus_\chi A_\chi$
\end{lemma}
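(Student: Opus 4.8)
The plan is to compare rational $l$-adic Tate modules as Galois representations and then invoke Faltings' isogeny theorem, which says that two abelian varieties over the number field $K$ are $K$-isogenous precisely when their $V_l$ are isomorphic as $\Q_l[\Gal(\Kbar/K)]$-modules. Write $G_K = \Gal(\Kbar/K)$ and $H = \Gal(\Kbar/L)$, so that $G = G_K/H$. Since $B = \operatorname{Res}_{L/K}(A_1)_L$, the standard formula for the Tate module of a Weil restriction along a separable extension gives an isomorphism of $F\otimes_\Q\Q_l$-modules with compatible $G_K$-action
\[
V_l(B) \;\cong\; \operatorname{Ind}_H^{G_K} V_l\big((A_1)_L\big) \;=\; \operatorname{Ind}_H^{G_K}\operatorname{Res}_H^{G_K} V_l(A_1),
\]
where the second equality holds because base change of $A_1$ to $L$ merely restricts the $G_K$-action to $H$. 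Note that $B$ inherits an action of an order of $F$ from $A_1$ by functoriality of Weil restriction, so this isomorphism is $F\otimes_\Q\Q_l$-linear.

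Next I would apply the projection formula, valid over any commutative coefficient ring $R$, namely $\operatorname{Ind}_H^{G_K}(\operatorname{Res}_H^{G_K} W) \cong W \otimes_R \operatorname{Ind}_H^{G_K} R$, taking $R = F\otimes_\Q\Q_l$ and $W = V_l(A_1)$. This reduces matters to the permutation module $\operatorname{Ind}_H^{G_K} R$, which factors through $G = G_K/H$ and is the group algebra $R[G]$ with $G_K$ acting through $G$. Here the hypothesis on $F$ enters: because $F$ contains all $\exp(G)$-th roots of unity and $G$ is abelian, $F[G]$ splits completely as $\bigoplus_\chi F_\chi$, and after tensoring with $\Q_l$ we obtain $R[G] \cong \bigoplus_\chi R_\chi$, the sum running over the characters $\chi\colon G \to F^*$, each appearing once, with $R_\chi$ denoting $R$ on which $G$ acts by $\chi$. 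Feeding this back into the projection formula yields
\[
V_l(B) \;\cong\; \bigoplus_\chi \big(\chi \otimes_{\Q_l} V_l(A_1)\big)
\]
as $F\otimes_\Q\Q_l$-modules with their twisted $G_K$-actions.

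At this point the hypothesis $V_l(A_\chi) \cong \chi \otimes_{\Q_l} V_l(A_1)$ identifies each summand, giving $V_l(B) \cong \bigoplus_\chi V_l(A_\chi) = V_l\big(\bigoplus_\chi A_\chi\big)$. Forgetting the $F\otimes_\Q\Q_l$-structure and retaining only the underlying $\Q_l[G_K]$-module isomorphism, Faltings' isogeny theorem then produces a $K$-isogeny $B \sim \bigoplus_\chi A_\chi$, which is what is claimed; the resulting isogeny need not be $F$-linear, but the statement asks only for an isogeny of abelian varieties.

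The genuine content, and the step requiring the most care, is the representation-theoretic bookkeeping that converts the induced module $\operatorname{Ind}_H^{G_K} R$ into the character-indexed direct sum. One must verify that the characters taking values in $F^*$ are exactly those indexing the $A_\chi$ (this is precisely what the root-of-unity hypothesis guarantees), and that the twists are formed over the coefficient ring $R = F\otimes_\Q\Q_l$ so that the given isomorphism $V_l(A_\chi)\cong\chi\otimes_{\Q_l}V_l(A_1)$ applies verbatim. By contrast, the Weil-restriction Tate-module formula, the projection formula, and Faltings' theorem are standard inputs that can be cited rather than reproved.
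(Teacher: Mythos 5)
Your proposal is correct and follows essentially the same route as the paper: both identify $V_l(B)$ with $V_l(A_1)\otimes_{\Q_l}\Q_l[G]$ (you via the induction and projection formulae, the paper via the explicit computation $B(\Kbar)=A_1(\Kbar\otimes_K L)$ and $\Kbar\otimes_K L\cong\Kbar[G]$), then decompose the group algebra into characters using that $G$ is abelian and $F$ contains the $\exp(G)$-th roots of unity, match the summands with $V_l(A_\chi)$ by hypothesis, and conclude with Faltings' isogeny theorem.
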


\begin{proof}
Let $A = A_1$ and $\Kbar$ the separable closure of $K$.
First we claim that the Galois module $V_l(B)$ is isomorphic to $V_l(A) \otimes_{\Q_l} \Q_l [G]$.
Since $L/K$ is Galois, we have that $\Kbar \otimes_K L \cong \Kbar[G]$.
Under this isomorphism, for $\sigma \in G_K$, the automorphism $\sigma \otimes \id$ on
$\Kbar \otimes_K L$ corresponds to the canonical action on $\Kbar[G]$.
Hence we have that
\begin{equation*}
B(\Kbar) = A(\Kbar \otimes _K L) = A(\prod_{\sigma \in G} \Kbar) = \bigoplus_{\sigma \in G} A(\Kbar)
= A(\Kbar) \otimes_\Z \Z[G].
\end{equation*}
Thus the claim follows.
Therefore since $G$ is abelian, we have that
\begin{equation*}
\begin{aligned}
V_l(B) & \isom V_l(A) \otimes_{\Q_l} \Q_l[G] \\
& \isom \bigoplus_\chi \chi \otimes V_l(A) \\
& \isom \bigoplus_\chi V_l(A_\chi).
\end{aligned}
\end{equation*}
Therefore by Faltings \cite[Korollar1]{Faltings} we have the result.
\end{proof}

\begin{corollary} \label{twist}
Let $f$ be a normalized eigenform of weight $2$ of level $\Gamma_1(N)$,
and $K/\Q$ be an abelian extension with the Galois group $G$.
Then we have
\begin{equation*}
\dim_{K_f} A_f(K) \otimes \Q = \sum_\chi \dim_{K_{f_\chi}} A_{f_\chi}(\Q) \otimes \Q,
\end{equation*}
where the sum is taken over all Dirichlet's characters of $K/\Q$.
\end{corollary}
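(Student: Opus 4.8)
The plan is to read the identity off from the preceding Lemma, taking the abelian extension there to be $K/\Q$ itself and the base field to be $\Q$. The first move is to replace $A_f(K)$ by the Weil restriction $B = \operatorname{Res}_{K/\Q}(A_f)_K$: its defining property gives $B(\Q) = (A_f)_K(K) = A_f(K)$, and since the $K_f$-action on $A_f$ is defined over $\Q$ and commutes with everything in sight, this is an isomorphism of $K_f$-modules, so the left-hand side equals $\dim_{K_f}B(\Q)\otimes\Q$. Applying the Lemma with $A_1 = A_f$ and with $A_\chi$ the $\chi$-twist of $A_f$ then exhibits $B$ up to isogeny as $\bigoplus_\chi A_\chi$, and the task reduces to computing the ranks of the factors and reconciling them with the right-hand side.

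The step carrying the real content is the verification of the Lemma's hypothesis, the Galois-module isomorphism $V_l(A_\chi)\isom\chi\otimes V_l(A_f)$, which I would establish by comparing traces of Frobenius. For $p$ not dividing the level the arithmetic Frobenius acts on $V_l(A_f)$ with trace $a_p(f)$ and hence on $\chi\otimes V_l(A_f)$ with trace $\chi(p)a_p(f)$, which by the definition of the twist recalled in the preliminaries equals $a_p(f_\chi)$, the corresponding trace for the twisted form. Since the Tate modules in play are semisimple as Galois representations by Faltings, the Brauer--Nesbitt principle together with the Chebotarev density theorem upgrade this equality of almost all Frobenius traces to the desired isomorphism over $F\otimes_\Q\Q_l$, where $F$ is chosen to contain the $\exp(G)$-th roots of unity and all the coefficient fields $K_{f_\chi}$ simultaneously.

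The main obstacle is the bookkeeping of coefficient fields that connects the factors $A_\chi$ produced by the Lemma to the newform abelian varieties $A_{f_\chi}$ appearing on the right-hand side, whose coefficient fields $K_{f_\chi}$ generally differ from $K_f$ and from one another. The reconciling observation is that the characters fall into Galois orbits under $\Gal(K_f(\zeta_{\exp G})/K_f)$: the twists $A_\chi$ over a single orbit assemble into one newform variety $A_{f_\chi}$ of dimension $[K_{f_\chi}:\Q]$, and the size of the orbit is exactly $[K_{f_\chi}:K_f]$. Consequently the orbit-sum $\sum_{\chi}\dim_{K_{f_\chi}}A_{f_\chi}(\Q)\otimes\Q$ equals $[K_{f_\chi}:K_f]\dim_{K_{f_\chi}}A_{f_\chi}(\Q)\otimes\Q = \dim_{K_f}A_{f_\chi}(\Q)\otimes\Q$, which is precisely the $K_f$-rank of the corresponding isogeny factor of $B$. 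Summing over orbits then turns the bare isogeny-invariance of rank into the stated identity with its mixed coefficient fields. The part needing the most care is the behaviour under self-twists of $f$, where distinct characters can give the same form and the orbit count must be corrected; checking that the tally remains correct in that degenerate case is where I expect the argument to be most delicate.
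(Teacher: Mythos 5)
Your skeleton is the same as the paper's: realize $A_f(K)$ as the $\Q$-points of the Weil restriction $B$, verify the hypothesis of the preceding Lemma by comparing Frobenius traces and invoking Faltings' semisimplicity together with Chebotarev, and then read ranks off the isogeny $B \isom \bigoplus_\chi A_\chi$. The genuine gap is in how you feed the newform varieties into the Lemma. The Lemma requires \emph{every} $A_\chi$ to carry an action of an order of one common field $F$ containing the $\exp(G)$-th roots of unity, and requires an isomorphism $V_l(A_\chi) \isom \chi \otimes_{\Q_l} V_l(A_1)$ of modules over $F \otimes_\Q \Q_l$. With your choice $A_\chi = A_{f_\chi}$ this hypothesis cannot even be formulated: $V_l(A_{f_\chi})$ is a module over $K_{f_\chi} \otimes_\Q \Q_l$, not over $F \otimes_\Q \Q_l$, and the isomorphism fails already on dimension grounds since $\dim A_{f_\chi} = [K_{f_\chi} : \Q]$ varies with $\chi$ while $\dim A_f = [K_f : \Q]$. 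The paper resolves exactly this point by a device missing from your proposal: it replaces each $A_{f_\chi}$ by the power $A_{f_\chi}^{\oplus [F : K_{f_\chi}]}$, which acquires an action of an order of $F$ via a $K_{f_\chi}$-algebra map $F \to M_{[F:K_{f_\chi}]}(K_{f_\chi}) \subseteq M_{[F:K_{f_\chi}]}(\End^0 A_{f_\chi})$; the Lemma then applies character by character to these powers, yielding $[F:K_f]\rank A_f(K) = \sum_\chi [F:K_{f_\chi}]\rank A_{f_\chi}(\Q)$, which rescales to the stated identity. No orbit counting is ever needed.

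Your substitute for that device --- assembling the characters into Galois orbits over $K_f$ --- is incorrect as stated, and it breaks precisely in the ``delicate'' case you flag and leave unverified. Write $K_f(\chi)$ for the field generated over $K_f$ by the values of $\chi$. The orbit of $\chi$ under $\Gal(K_f(\zeta_{\exp(G)})/K_f)$ has size $[K_f(\chi):K_f]$, not $[K_{f_\chi}:K_f]$, and the corresponding isogeny factor of $B$ is $A_{f_\chi}^{\oplus [K_f(\chi):K_{f_\chi}]}$, not a single copy of $A_{f_\chi}$; one has $K_{f_\chi} \subseteq K_f(\chi)$ but equality can fail. Worse, the containment $K_f \subseteq K_{f_\chi}$ presupposed by your equality $[K_{f_\chi}:K_f]\dim_{K_{f_\chi}}A_{f_\chi}(\Q)\otimes\Q = \dim_{K_f}A_{f_\chi}(\Q)\otimes\Q$ can fail outright: if $f = g_\nu$ is itself the twist of a newform $g$ with $K_g \subsetneq K_f$ (say $g$ with rational coefficients and $\nu$ cubic, so $K_f = \Q(\zeta_3)$), then $f_{\bar\nu}$ has coefficient field $\Q$, its orbit is a singleton, and the associated factor of $B$ is $A_g^{\oplus 2}$ --- contradicting both of your counting claims at once. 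Forms with CM or inner twists, for which these degeneracies occur, are genuinely present among the newforms relevant to this paper, so the case cannot be waved away. The bookkeeping can be repaired (each orbit contributes $[K_f(\chi):K_f]\,\dim_{K_{f_\chi}}A_{f_\chi}(\Q)\otimes\Q$ to both sides), but as written your tally is wrong exactly where the coefficient fields degenerate, whereas the paper's power trick makes those cases invisible.
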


\begin{proof}
Let $F$ be a finite extension of $K_f$ which contains all $\exp(G)$-th roots of unity.
The endomorphism ring of $A_{f_\chi}^{\oplus [F : K_{f_\chi}]}$ up to isogeny is isomorphic to
$M_{[F : K_{f_\chi}]} ( \End^0 A_{f_\chi} )$, which contains $M_{[F : K_{f_\chi}]} (K_{f_\chi})$.
Hence fixing a $K_{f_\chi}$-algebra homomorphism $F \to M_{[F : K_{f_\chi}]} (K_{f_\chi})$,
we let an order of $F$ act on $A_{f_\chi}^{\oplus [F : K_{f_\chi}]}$.
With this action, for every prime $l$, the Galois module $V_l(A_{f_\chi}^{\oplus [F : K_{f_\chi}]})$ is isomorphic to
$V_l(A_{f_\chi}) \otimes_{K_{f_\chi}} F$ over $F \otimes_\Q \Q_l$,
and for a prime $p$ not dividing $Nl$, a $p$-th arithmetic Frobenius $\varphi_p$ has the trace $\chi(p) a_p(f)$.
Therefore by the semi-simplicity of the Tate modules of abelian varieties (\cite[Satz3]{Faltings})
and by Chebotarev's density theorem, we have $V_l(A_{f_\chi}^{\oplus [F : K_{f_\chi}]}) \isom \chi \otimes V_l(A_f^{\oplus [F : K_f]})$.
Thus by the lemma we have $[F : K_f] \rank A_f(K) = \sum_\chi [F : K_{f_\chi}] \rank A_{f_\chi}(\Q)$.
\end{proof}

\begin{lemma} \label{new-eigen}
Let $f$ be a normalized eigenform of weight $2$ of level $\Gamma_1(N)$,
and let $g$ be the newform associated with $f$.
Then $A_f$ is isogenous to $A_g^{\oplus [K_f : K_g]}$,
and we have $\dim_{K_f}A_f(\Q) \otimes \Q = \dim_{K_g} A_g(\Q) \otimes \Q$.
\end{lemma}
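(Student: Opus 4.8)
The plan is to establish the isogeny first and then deduce the rank equality from it formally; the whole argument runs parallel to \cref{twist}, comparing $l$-adic Tate modules and appealing to Faltings. First I would record the numerics that make the claim consistent: recall from the preliminaries that $K_g \subseteq K_f$, so $[K_f:K_g]$ is defined, and that since $g$ is a newform of level $M \mid N$ we have $\dim A_g = [K_g:\Q]$; hence $\dim A_g^{\oplus[K_f:K_g]} = [K_f:K_g][K_g:\Q] = [K_f:\Q] = \dim A_f$, so the two sides have the same dimension.

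For the isogeny I would fix a prime $l$ and compare $V_l(A_f)$ with $V_l(A_g^{\oplus[K_f:K_g]}) = V_l(A_g)^{\oplus[K_f:K_g]}$ as $\Q_l[G_\Q]$-modules. Both are unramified outside $Nl$ and both are semisimple by Faltings (\cite[Satz3]{Faltings}); hence, by Chebotarev's density theorem together with the Brauer--Nesbitt theorem, it suffices to check that a $p$-th arithmetic Frobenius $\varphi_p$ has the same $\Q_l$-trace on both modules for every $p \nmid Nl$. On the first module, $V_l(A_f)$ is free of rank $2$ over $K_f \otimes_\Q \Q_l$ with $(K_f \otimes_\Q \Q_l)$-trace $a_p(f)$, so its $\Q_l$-trace is $\operatorname{Tr}_{K_f/\Q} a_p(f)$; on the second, the trace of $\varphi_p$ on $V_l(A_g)$ is $\operatorname{Tr}_{K_g/\Q} a_p(g)$ by the same reasoning, so the $\Q_l$-trace on the direct sum is $[K_f:K_g]\operatorname{Tr}_{K_g/\Q} a_p(g)$. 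By the defining property of the associated newform, $a_p(f) = a_p(g)$ for all $p \nmid N/M$, in particular for all $p \nmid N$, and transitivity of the trace gives $\operatorname{Tr}_{K_f/\Q} a_p(g) = [K_f:K_g]\operatorname{Tr}_{K_g/\Q} a_p(g)$ since $a_p(g) \in K_g$. The two Frobenius traces therefore coincide, the Tate modules are isomorphic as Galois modules, and Faltings' isogeny theorem (\cite[Korollar1]{Faltings}) yields that $A_f$ is isogenous to $A_g^{\oplus[K_f:K_g]}$.

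The rank equality is then purely formal. An isogeny induces an isomorphism $A_f(\Q) \otimes \Q \isom (A_g(\Q) \otimes \Q)^{\oplus[K_f:K_g]}$ of $\Q$-vector spaces, so $\dim_\Q A_f(\Q) \otimes \Q = [K_f:K_g]\dim_\Q A_g(\Q) \otimes \Q$. Since an order of $K_f$ (resp.\ $K_g$) acts on $A_f$ (resp.\ $A_g$), the space $A_f(\Q) \otimes \Q$ is a $K_f$-vector space and $A_g(\Q) \otimes \Q$ a $K_g$-vector space, whence $\dim_\Q A_f(\Q)\otimes\Q = [K_f:\Q]\dim_{K_f} A_f(\Q)\otimes\Q$ and similarly for $g$. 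Substituting and cancelling the factor $[K_f:\Q] = [K_f:K_g][K_g:\Q]$ gives $\dim_{K_f} A_f(\Q)\otimes\Q = \dim_{K_g} A_g(\Q)\otimes\Q$.

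The hard part will be the passage from equality of Frobenius traces to isomorphism of Galois modules: one must know in advance that both representations are semisimple and unramified outside a fixed finite set before Chebotarev applies, which is exactly what Faltings' semisimplicity supplies. Everything else is bookkeeping with field degrees and the identity $a_p(f) = a_p(g)$, so no input beyond what already served in \cref{twist} should be needed.
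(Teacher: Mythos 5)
Your proposal is correct and takes essentially the same route as the paper: compare Frobenius traces on the Tate modules, use Faltings' semisimplicity plus Chebotarev to upgrade equality of traces to an isomorphism of Galois modules, and conclude with Faltings' isogeny theorem. The only cosmetic differences are that the paper compares $V_l(A_f)$ with $V_l(A_g) \otimes_{K_g} K_f$ keeping traces valued in $K_f \otimes_\Q \Q_l$ (decomposing into field factors) whereas you collapse to $\Q_l$-valued traces via $\operatorname{Tr}_{K_f/\Q}$ and Brauer--Nesbitt, and that you spell out the formal deduction of the rank equality from the isogeny, which the paper leaves implicit.
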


\begin{proof}
Let $L = K_f$.
Then for a prime $l$ and for a prime $p$ not dividing $lN$,
the traces of a $p$-th arithmetic Frobenius on the Galois modules $V_l(A_f)$ and $V_l(A_g) \otimes_{K_g} L$
over $L \otimes_\Q \Q_l$ are $a_p(f) = a_p(g)$.
Therefore considering the decomposition of $L \otimes_\Q \Q_l$ into a product of fields and
corresponding decomposition of these modules, we have,
by semi-simplicity and by Chebotarev's density theorem, that these two modules are isomorphic.
In particular $V_l(A_f)$ and $V_l(A_g)^{\oplus [K_f : K_g]}$ are isomorphic over $\Q_l$.
Thus by Faltings' theorem we have that $A_f$ and $A_g^{\oplus [K_f : K_g]}$ are isogenous.
\end{proof}

Let $\Delta$ be the group
\begin{equation*}
\ker ( (\Z/M^2N)^* / \pm 1 \to (\Z/MN)^* / \pm 1).
\end{equation*}
Then by \cite{JeonKim}, we have that $(X_\Delta)_{\Q(\zeta_M)} \isom X_1(M,MN)$.
Recall that $NF$ is the set of newforms of weight $2$, level dividing $M^2 N$, and conductor dividing $MN$,
that $\overline{NF}$ is the set of Galois conjugacy classes of $NF$,
and that $D$ is the group of Dirichlet characters modulo $M$.

Using these statements we deduce the following crucial proposition.

\begin{proposition} \label{crucial}
Consider the following statements:
\begin{enumerate}[(1)]
\item The rank of $J_1(M,MN)(\Q(\zeta_M))$ is zero. \label{rankQzeta}
\item The rank of $J_\Delta(\Q)$ is zero. \label{rankQ}
\item For every newform $f \in NF$, the special value of the L-function $L(f,s)$ at $s=1$ is nonzero. \label{Lorder}
\end{enumerate}
Then we have $\eqref{rankQzeta} \iff \eqref{rankQ} \Leftarrow \eqref{Lorder}$.
Moreover if the Birch--Swinnerton-Dyer conjecture is true, then these statements are equivalent.
\end{proposition}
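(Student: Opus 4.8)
The plan is to establish $\eqref{rankQzeta} \iff \eqref{rankQ}$ first, purely by tracking simple isogeny factors, and then to read off $\eqref{Lorder} \Rightarrow \eqref{rankQ}$ together with its conditional converse from Kato's theorem. First I would use the isomorphism $(X_\Delta)_{\Q(\zeta_M)} \isom X_1(M,MN)$ of Jeon--Kim, which identifies $J_1(M,MN)$ over $\Q(\zeta_M)$ with the base change of $J_\Delta$, so that $\eqref{rankQzeta}$ is equivalent to $\rank J_\Delta(\Q(\zeta_M)) = 0$. Applying the isogeny decomposition of \cref{decomposition_J1MN} gives
\begin{equation*}
\rank J_\Delta(\Q(\zeta_M)) = \sum_{G_\Q f \in \overline{NF}} \sigma_0(M^2N/N_f) \rank A_f(\Q(\zeta_M)),
\end{equation*}
and the analogous formula over $\Q$. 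Since every weight $\sigma_0(M^2N/N_f)$ is positive and every rank is nonnegative, $\eqref{rankQzeta}$ is equivalent to $\rank A_f(\Q(\zeta_M)) = 0$ for all $f \in NF$, while $\eqref{rankQ}$ is equivalent to $\rank A_f(\Q) = 0$ for all $f \in NF$; in either case this is the vanishing of the corresponding $K_f$-dimension $\dim_{K_f} A_f(\,\cdot\,) \otimes \Q$.

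The heart of the argument is the twist bookkeeping relating these two vanishing conditions. Applying \cref{twist} with $K = \Q(\zeta_M)$, whose group of Dirichlet characters is exactly $D$, yields
\begin{equation*}
\dim_{K_f} A_f(\Q(\zeta_M)) \otimes \Q = \sum_{\chi \in D} \dim_{K_{f_\chi}} A_{f_\chi}(\Q) \otimes \Q,
\end{equation*}
and by \cref{new-eigen} each summand equals $\dim_{K_{(f_\chi)^{\operatorname{new}}}} A_{(f_\chi)^{\operatorname{new}}}(\Q) \otimes \Q$. Hence $\rank A_f(\Q(\zeta_M)) = 0$ for all $f$ precisely when $A_{(f_\chi)^{\operatorname{new}}}(\Q)$ has rank zero for all $f \in NF$ and all $\chi \in D$. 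By \cref{twist_of_NF} the assignment $(\chi, f) \mapsto (f_\chi)^{\operatorname{new}}$ is a right action of $D$ on $NF$; in particular $f \cdot 1 = f$, so the newforms $(f_\chi)^{\operatorname{new}}$ range over all of $NF$ as $f$ and $\chi$ vary. Therefore ``rank zero for every twist'' collapses to ``$\rank A_h(\Q) = 0$ for every $h \in NF$'', which is exactly $\eqref{rankQ}$, establishing $\eqref{rankQzeta} \iff \eqref{rankQ}$.

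For the implication $\eqref{Lorder} \Rightarrow \eqref{rankQ}$, I would note that $\eqref{rankQ}$ holds if and only if $\rank A_f(\Q) = 0$ for every $f \in NF$. By Kato's theorem, $L(f,1) \neq 0$ forces $\rank A_f(\Q) = 0$, so $\eqref{Lorder}$ implies $\eqref{rankQ}$. Under the Birch--Swinnerton-Dyer conjecture the converse direction of Kato's implication holds as well, so $\rank A_f(\Q) = 0$ forces the order of vanishing of $L(f,s)$ at $s = 1$ to be zero, i.e. $L(f,1) \neq 0$, for each $f \in NF$. Hence under BSD we obtain $\eqref{rankQ} \Rightarrow \eqref{Lorder}$, closing the cycle and giving the equivalence of all three statements.

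I expect the main obstacle to lie in the twist step rather than in the Kato input. One must verify that $D$ is genuinely the character group of $\Gal(\Q(\zeta_M)/\Q)$, so that the sum in \cref{twist} is indexed precisely by $D$, and that the $D$-action of \cref{twist_of_NF} stabilizes $NF$ and fixes each form under the trivial character, so that the family $(f_\chi)^{\operatorname{new}}$ exhausts $NF$ rather than merely landing in some larger collection of newforms. The level and conductor constraints defining $NF$ (level dividing $M^2N$, conductor dividing $MN$) are exactly what force this closure, and confirming this compatibility is where the care is required.
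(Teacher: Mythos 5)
Your proposal is correct and takes essentially the same route as the paper: both rest on \cref{decomposition_J1MN}, \cref{twist}, \cref{new-eigen}, and \cref{twist_of_NF} (whose level/conductor computation the paper inlines rather than cites), together with Kato's theorem and its conjectural converse under Birch--Swinnerton-Dyer. The only cosmetic difference is that the paper obtains the implication from (1) to (2) trivially from the inclusion $J_\Delta(\Q) \subseteq J_\Delta(\Q(\zeta_M))$ and reserves the twist bookkeeping for the converse, whereas you derive both directions at once by observing that the $D$-action on $NF$ (via its identity element) makes the twisted newforms exhaust $NF$; both arguments are sound.
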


\begin{proof}
Since $(X_\Delta)_{\Q(\zeta_M)} \isom X_1(M,MN)$, the condition \eqref{rankQzeta} implies \eqref{rankQ}.

By \cref{decomposition_J1MN}, the modular Jacobian variety $J_\Delta$ is isogenous to
\begin{equation*}
\bigoplus_{G_\Q f \in \overline{NF}} A_f^{\sigma_0(M^2N/N_f)}.
\end{equation*}
Thus by the theorem of Kato, we have that \eqref{Lorder} implies \eqref{rankQ},
and that if the Birch--Swinnerton-Dyer conjecture is true, then these two statements are equivalent.

Finally by \cref{twist,decomposition_J1MN}, the rank of $J_1(M,MN)(\Q(\zeta_M)) = J_\Delta(\Q(\zeta_M))$ is zero
if and only if the rank of $A_{f_\chi} (\Q)$ is zero for every Dirichlet character $\chi$ modulo $M$ and
for every newform $f \in NF$.
Now such a twist is a normalized eigenform of level $\lcm(M^2N, MNM, M^2) = M^2N$
with the character of conductor dividing $\lcm(MN, M) = MN$.
Therefore by \cref{new-eigen} we have that \eqref{rankQ} implies \eqref{rankQzeta}.
\end{proof}

From this proposition, we easily prove the main theorem following the method of \cite{DEvHMZB}.

\begin{lemma} \label{rank0MN}
For $M \ge 3$, both of the ranks of $J_1(MN)(\Q)$ and of $J_0(M^2N)(\Q)$ are zero if $(M,N)$ is in the following list:
\begin{enumerate}[$M=1,$]
\setcounter{enumi}{2}
\item $N \in \{ 1, 2, 3, 4, 5, 6, 7, 8, 9, 10, 12, 14, 16, 20 \},$
\item $N \in  \{ 1, 2, 3, 4, 5, 6, 9 \},$
\item $N \in  \{ 1, 2, 3, 4, 6 \},$
\item $N \in  \{ 1, 2, 3, 4, 5 \},$
\item $N \in  \{ 1, 2 \},$
\item $N = 1,$
\item $N = 1,$
\item $N = 1,$
\setcounter{enumi}{11}
\item $N = 1.$
\end{enumerate}
Moreover if the Birch--Swinnerton-Dyer conjecture is true, then the converse holds.

For $M \ge 3$, the rank of $J_1(M^2N)(\Q)$ is zero if $(M,N)$ is in the following list:
\begin{enumerate}[$M=1,$]
\setcounter{enumi}{2}
\item $N \in \{ 1, 2, 3, 4, 5, 6, 8, 9, 10, 12, 20 \},$
\item $N \in  \{ 1, 2, 3, 4, 6 \},$
\item $N \in  \{ 1, 2, 3, 4, 6 \},$
\item $N \in  \{ 1, 2, 3, 5 \},$
\item $N \in  \{ 1, 2 \},$
\item $N = 1,$
\item $N = 1,$
\item $N = 1,$
\end{enumerate}
Moreover if the Birch--Swinnerton-Dyer conjecture is true, then the converse holds.
\end{lemma}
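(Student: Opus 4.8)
The plan is to reduce each of the three rank-zero assertions to a statement about the central L-values of individual newforms, and then to settle those by a finite computation. The conceptual content is the reduction; the real labour is the enumeration of eigenforms.

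First I would decompose the Jacobians into newform pieces. By \eqref{eq:simple_decomposition}, for any level $n$ the variety $J_1(n)$ is isogenous to a product of the abelian varieties $A_f$ attached to weight-$2$ eigenforms $f$ of level dividing $n$, and $J_0(n)$ to the sub-product over those $f$ whose character is trivial; the multiplicities $\sigma_0(n/N_f)$ are irrelevant, since $\rank A_f^{m}(\Q) = m \cdot \rank A_f(\Q)$. Combining this with \cref{new-eigen}, which gives $\rank A_f(\Q) = \rank A_g(\Q)$ for $g = f^{\operatorname{new}}$, each of the three ranks vanishes if and only if $\rank A_g(\Q) = 0$ for every newform $g$ in the corresponding finite set: level dividing $MN$ for $J_1(MN)$, level dividing $M^2N$ with trivial character for $J_0(M^2N)$, and level dividing $M^2N$ with arbitrary character for $J_1(M^2N)$. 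This is why the second list is contained in the first.

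Next I would invoke Kato's theorem in the form recalled in the preliminaries: $L(g,1) \ne 0$ implies $\rank A_g(\Q) = 0$. This yields the unconditional ``if'' directions at once. For each pair $(M,N)$ in the two lists the integers $MN$ and $M^2N$ are explicit and bounded by roughly $200$ in the range $M \ge 3$ under consideration, so one enumerates the finitely many weight-$2$ newforms of the relevant level and nebentypus and certifies that $L(g,1) \ne 0$ — equivalently, that each occurring $A_g$ has analytic rank zero — by standard modular-symbol computations. For the converse I would assume the Birch--Swinnerton-Dyer conjecture, under which $\rank A_g(\Q) = \operatorname{ord}_{s=1} L(g,s)$; then a Jacobian has positive rank precisely when some occurring newform satisfies $L(g,1) = 0$, and for each $(M,N)$ \emph{not} on the relevant list I would exhibit such a newform of positive analytic rank, again from the same computation. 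The endpoint of each list is exactly the smallest level at which a weight-$2$ eigenform of analytic rank $\ge 1$ first appears under the prescribed character constraint.

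The main obstacle is computational rather than conceptual. One must reliably list every weight-$2$ newform of level up to a couple hundred, tracking the character conditions carefully so that $J_0(M^2N)$ and $J_1(M^2N)$ select the correct eigenforms, and then certify the (non)vanishing of each central value. The unconditional half is the more delicate one: non-vanishing must be secured by a rigorous positive lower bound on $L(g,1)$ rather than a numerical approximation that merely appears to be away from zero, so the bookkeeping across the several values of $M$ and across the two distinct lists is where care is most needed.
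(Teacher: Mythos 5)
Your outline is mathematically sound, but it is not what the paper does: the paper's entire proof of this lemma is a one-line citation of \cite[Theorem 3.1]{DEvHMZB}, together with the remark that the ``if'' direction of that theorem is unconditional (resting on Kato's theorem \cite[Corollary 14.3]{Kato}) while the converse direction requires the still-unproved converse of Kato's theorem, i.e.\ the relevant case of the Birch--Swinnerton-Dyer conjecture. What you have written is essentially a reconstruction of the proof of that cited theorem: the decomposition \eqref{eq:simple_decomposition} into factors $A_f$, the reduction to central $L$-values via Kato, BSD for the converse, and a finite computation. This is also precisely the strategy the paper itself uses for its main theorem, via \cref{crucial} and the algorithm of \cite[Theorem 4.5]{AgasheStein}. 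The citation buys brevity and defers the heavy computation to the literature; your route makes the lemma self-contained, but then you owe the reader the actual enumeration and certification of $L$-values.

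Two points in your sketch need tightening. First, the converse concerns the infinitely many pairs $(M,N)$ \emph{not} on the lists, so it cannot be settled by examining ``each such pair'' computationally; you need the inheritance step that positive rank propagates under divisibility of levels --- if $m \mid n$, every newform of level dividing $m$ has level dividing $n$, so by \eqref{eq:simple_decomposition} a positive-rank simple factor of $J_1(m)$ also occurs in $J_1(n)$ --- which reduces the converse to finitely many minimal off-list cases. Your remark about ``the smallest level at which a form of analytic rank $\ge 1$ first appears'' gestures at this, but the lists are not initial segments (for $M=3$, $N=7$ is on the first list but not the second), so the reduction must be organized by divisibility, not by size. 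Second, your concern about certifying nonvanishing rigorously is already resolved by the algorithm the paper cites: \cite[Theorem 4.5]{AgasheStein} computes the rational number $L(f,1)/\Omega_f$ exactly via modular symbols, so no floating-point lower bound is required.
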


\begin{proof}
By \cite[Theorem 3.1]{DEvHMZB}.
As we remark in the introduction, the proof of \cite[Theorem 3.1]{DEvHMZB} seems to be invalid
since it requires the unproved part of the Birch--Swinnerton-Dyer conjecture, however the 'if' part is still valid.
\end{proof}

\begin{proof}[Proof of the main theorem]
First of all, for the group $\Delta$ as in the \cref{crucial}, there are morphisms
$X_1(M^2N) \to X_\Delta \to X_0(M^2N)$ over $\Q$
and $X_1(M,MN) \to X_1(MN)$ over $\Q(\zeta_M)$.
Hence using \cref{crucial}, we have that,
if the rank of $J_1(M^2N)(\Q)$ is zero then the rank of $J_1(M,MN)(\Q(\zeta_M))$ is zero,
and conversely if the rank of $J_1(M,MN)(\Q(\zeta_M))$ is zero then the ranks of $J_1(MN)(\Q)$ and of $J_0(M^2N)(\Q)$ are zero.

Thus by \cref{rank0MN} it remains to show that the special values of the L-functions of
$J_1(M,MN)(\Q(\zeta_M))$ at $s = 1$ are nonzero for
$(M,N) =(3,7),(3,14), \\ (3,16),(4,5),(6,4)$ and for $(12,1)$,
and that the special value of the L-function of $J_1(M,MN)(\Q(\zeta_M))$ for $(M,N) = (4,9)$ is zero.

Using the algorithm of \cite[Theorem 4.5.]{AgasheStein}, we can compute
for a newform $f$ whether $L(f,1)$ is zero or not.
(For example use Magma \cite{Magma}.)
According to it we have that $L(f,1) \neq 0$ for every newform of level dividing $M^2N$
with the character of conductor dividing $MN$, except the newforms
in the Galois conjugacy class of newforms labeled "G1N144I" by Magma \cite{Magma},
whose character has the conductor $36$,
and conversely for those newforms $f$, we have $L(f,1) = 0$.
Therefore by \cref{crucial} we have the desired result.
\end{proof}

Lastly from this theorem we deduce a corollary about the existence of certain kinds of elliptic curves.

\begin{corollary}
For $N=1,\dots,5$, there exist infinitely many elliptic curves over $\Q(\zeta_N)$ whose Mordell--Weil groups contain
subgroups isomorphic to $(\Z/N)^2$.
For $N=6,\dots,10,12$, there are no such elliptic curves over $\Q(\zeta_N)$.
For the other $N$, there are at most finitely many such elliptic curves over $\Q(\zeta_N)$.
\end{corollary}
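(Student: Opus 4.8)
The plan is to reinterpret the statement as one about rational points on the modular curve $X(N) = X_1(N,N)$. For $N \ge 3$ this is a fine moduli space over $\Q(\zeta_N)$, and a non-cuspidal point of $X(N)(\Q(\zeta_N))$ is the same datum as an elliptic curve $E/\Q(\zeta_N)$ together with a basis of $E[N]$, that is, with $E[N] \isom (\Z/N)^2$ rational over $\Q(\zeta_N)$; by the Weil pairing any field over which the full $N$-torsion is rational contains $\zeta_N$, so $\Q(\zeta_N)$ is the natural base. Since the forgetful map $X(N) \to X(1)$ has finite fibres, infinitely many points produce infinitely many $j$-invariants and hence infinitely many isomorphism classes of such $E$. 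Everything therefore reduces to deciding, for each $N$, whether the non-cuspidal locus $Y(N)(\Q(\zeta_N)) = X(N)(\Q(\zeta_N)) \setminus \{\text{cusps}\}$ is infinite, finite, or empty, and the three regimes of the corollary correspond to the genus and the Mordell--Weil rank of $X(N)$.

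For $N = 1, \dots, 5$ the curve $X(N)$ has genus $0$ and carries a $\Q(\zeta_N)$-rational cusp, so $X(N) \isom \PP^1_{\Q(\zeta_N)}$ and $X(N)(\Q(\zeta_N))$ is infinite; discarding the finitely many cusps still leaves infinitely many points, giving infinitely many elliptic curves with $(\Z/N)^2 \subset E(\Q(\zeta_N))$. For $N = 11$ and for $N \ge 13$ the curve $X(N)$ has genus $\ge 2$, so Faltings' theorem makes $X(N)(\Q(\zeta_N))$ finite and there are at most finitely many such elliptic curves; these are exactly the $N$ for which the main theorem does not assert that $J_1(N,N)(\Q(\zeta_N))$ has rank zero.

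The substance of the proof lies in the range $N = 6, \dots, 10, 12$, where I must establish the stronger conclusion that no such curve exists, i.e. $Y(N)(\Q(\zeta_N)) = \emptyset$. For these $N$ the main theorem, applied with $M = N$ and list-parameter $1$ so that $J_1(M,MN) = J_1(N,N) = J(X(N))$, gives that $J(X(N))(\Q(\zeta_N))$ has Mordell--Weil rank zero. For $N = 6$ the curve has genus $1$, so $X(6)$ is an elliptic curve of rank zero over $\Q(\zeta_6)$ and $X(6)(\Q(\zeta_6))$ is its finite torsion subgroup; for $N = 7, \dots, 10, 12$ the genus is $\ge 2$, and the vanishing of the rank renders the finite set $X(N)(\Q(\zeta_N))$ effectively computable by the Riemann--Roch procedure recalled in the introduction. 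In either case I would then enumerate these finitely many points and verify that every one of them is a cusp, so that $Y(N)(\Q(\zeta_N))$ is empty.

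The main obstacle is precisely this last enumeration. Finiteness is cheap, coming from Faltings' theorem and the rank-zero input, but deducing \emph{emptiness} requires an explicit model of $X(N)$ over $\Q(\zeta_N)$ with its cusps identified, together with a provable determination of the whole of $X(N)(\Q(\zeta_N))$. This has to be done curve by curve for $N = 6, \dots, 10, 12$, and it is the rank-zero statement furnished by the main theorem that guarantees each such computation terminates.
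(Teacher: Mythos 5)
Your setup (non-cuspidal points of $X(N)$ over $\Q(\zeta_N)$, the genus-$0$ cases $N=1,\dots,5$, and Faltings for $N=11$ and $N\ge 13$) matches the paper exactly. But for the crucial range $N=6,\dots,10,12$ your proposal has a genuine gap: you establish only \emph{finiteness} of $X(N)(\Q(\zeta_N))$ and then defer the actual claim --- that every rational point is a cusp --- to a curve-by-curve enumeration ("I would then enumerate these finitely many points and verify that every one of them is a cusp") which you never carry out. That enumeration is not a routine afterthought: it requires explicit models of $X(N)$ over $\Q(\zeta_N)$ for curves of genus up to $25$ (e.g.\ $X(12)$), a provable determination of the full set of rational points (for $N=6$ this means computing the torsion of an elliptic curve over $\Q(\zeta_6)$; for the others the "Riemann--Roch procedure" is effective only in principle), and an identification of the cusps on each model. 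As written, the proposal proves the first and third assertions of the corollary but not the second, which is its real content.

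The paper closes this gap without any explicit models, by a reduction argument you should compare with. Since $\rank J(N)(\Q(\zeta_N))=0$, the Mordell--Weil group is torsion, so by Katz's theorem the reduction map $J(N)(\Q(\zeta_N)) \to J(N)(\F_q)$ at a good prime $\mathfrak{p}$ of residue characteristic $p$ with $e(\mathfrak{p}/p) < p-1$ is injective; since $X(N)$ has genus $>0$ and is a fine moduli scheme, $X(N) \injects J(N)$, hence $X(N)(\Q(\zeta_N)) \to X(N)(\F_q)$ is injective. If moreover the norm $q$ of $\mathfrak{p}$ satisfies $q < (N-1)^2$, then a non-cuspidal point of $X(N)(\F_q)$ would give an elliptic curve over $\F_q$ with $N^2 \le \#E(\F_q) \le (1+\sqrt{q})^2$, contradicting the Hasse bound; so $X(N)(\F_q)$ consists only of cusps. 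All cusps are already rational over $\Q(\zeta_N)$ (they are full level structures on the N\'eron $N$-gon) and their number is the same in both characteristics, so injectivity forces $X(N)(\Q(\zeta_N))$ to be exactly the set of cusps. The proof then reduces to the elementary check that for each $N \in \{6,\dots,10,12\}$ there is a good prime of $\Q(\zeta_N)$ of odd residue characteristic and norm less than $(N-1)^2$. If you want to salvage your approach, replacing your unexecuted enumeration step with this injectivity-of-reduction argument is the missing idea.
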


\begin{proof}
Consider the modular curves $X(N)$ over $\Q(\zeta_N)$ which classifies generalized elliptic curves with their full level $N$ structures.
The third statement is just Faltings' theorem.
So assume $N=1,\dots,10$ or $12$.
For each $N$ we consider, by the theorem, the Mordell--Weil rank of its Jacobian variety is zero.
For $N=1,\dots,4$ or $5$, since the curve $X(N)$ has genus $0$ and of course has a rational point (the $\infty$-cusp),
it is isomorphic to $\PP^1$.
Therefore the result in this case is trivial.
Next assume $N=6,\dots,10$ or $12$.
For such an $N$, the curve $X(N)$ has genus greater than $0$ and is the fine moduli scheme of the corresponding stack.
Hence $X(N)(\Q(\zeta_N)) \to J(N)(\Q(\zeta_N))$ is injective.
By \cite[Appendix]{Katz},
for a good prime $\mathfrak{p}$ of the curve $X(N)$ over $\Q(\zeta_N)$ above $p$ satisfying $e(\mathfrak{p}/p) < p-1$,
since the Mordell--Weil rank is zero, the reduction map $J(N)(\Q(\zeta_N)) \to J(N)(\F_q)$ at $\mathfrak{p}$ is injective.
Note that the condition $e(\mathfrak{p}/p) < p-1$ is almost automatic:
the good primes of $X(N)/\Q(\zeta_N)$ are exactly the primes which do not divide the level $N$,
and the unramified primes of $\Q(\zeta_N)$ are exactly the primes which do not divide $N$,
except the characteristic $2$.
Thus in this case, by the diagram
\begin{equation*}
\begin{aligned}
\xymatrix{
X(N)(\Q(\zeta_N)) \ar@{^{(}->}[r] \ar[d] & J(N)(\Q(\zeta_N)) \ar@{^{(}->}[d] \\
X(N)(\F_q) \ar@{^{(}->}[r] & J(N)(\F_q),
}
\end{aligned}
\end{equation*}
we have that the reduction map $X(N)(\Q(\zeta_N)) \to X(N)(\F_q)$ is injective,
where $q$ is the norm of $\mathfrak{p}.$
On the other hand, by the Hasse bound, if $N^2 > (1 + \sqrt{q})^2$, i.e., if $q < (N-1)^2$, then the later set consists of cusps.
Since for a field $k$ with the characteristic prime to $N$,
the cusps of $X(N)(\kbar)$ correspond to the full level $N$ structures of the Neron $N$-gon over $\kbar$,
both of $X(N)(\Q(\zeta_N))$ and $X(N)(\F_q)$ contain all cusps.
Since of course the number of cusps of $X(N)(\kbar)$ is independent on $k$,
we have that if $q < (N-1)^2$ then the reduction map $X(N)(\Q(\zeta_N)) \to X(N)(\F_q)$ is surjective.
Hence now what we need to show is the claim that, for our $N$, there is a good prime $\mathfrak{p}$ of $\Q(\zeta_N)$
whose norm over $\Q$ is less than $(N-1)^2$ and whose characteristic is odd.
This is easy, and we have done.
\end{proof}

\section{Higher rank}

In the previous section we prove that the rank of $J_1(M,MN)(\Q(\zeta_M))$ is zero
if and only if the rank of $J_\Delta(\Q)$ is zero,
where $\Delta$ is the subgroup as in \cref{crucial}.
With more careful computation it is possible to show a more general statement.
Recall that $NF$ is the set of newforms of weight $2$, level dividing $M^2 N$, and conductor dividing $MN$,
and $\overline{NF} = G_\Q \backslash NF$ is the set of Galois conjugacy classes of $NF$.
Also recall that $D$ is the group of the Dirichlet characters modulo $M$.

\begin{proposition} \label{computing/Qzeta}
We have the following formula:
\begin{equation*}
\rank J_1(M,MN)(\Q(\zeta_M)) = \sum_{G_\Q f \in \overline{NF} } \sum_{\chi \in D}
\sigma_0(M^2N/N_{(f_\chi)^{\operatorname{new}}}) \rank A_f(\Q),
\end{equation*}
where $N_{(f_\chi)^{\operatorname{new}}}$ is the level of $(f_\chi)^{\operatorname{new}}$,
and $\sigma_0$ is as in the preliminaries.
\end{proposition}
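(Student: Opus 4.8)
The plan is to start from the isogeny decomposition of \cref{decomposition_J1MN} and push it through the two rank comparisons already established. Since $(X_\Delta)_{\Q(\zeta_M)}\isom X_1(M,MN)$ we have $J_1(M,MN)(\Q(\zeta_M)) = J_\Delta(\Q(\zeta_M))$, so \cref{decomposition_J1MN} gives
\[
\rank J_1(M,MN)(\Q(\zeta_M)) = \sum_{G_\Q f \in \overline{NF}} \sigma_0(M^2N/N_f)\,\rank A_f(\Q(\zeta_M)),
\]
the sum running over a set of representatives $f$ of the Galois conjugacy classes. I would then evaluate each $\rank A_f(\Q(\zeta_M))$ by combining \cref{twist}, applied with $K = \Q(\zeta_M)$ (whose Dirichlet characters are exactly the elements of $D$), with \cref{new-eigen}, which replaces each twist $f_\chi$ by its newform $(f_\chi)^{\operatorname{new}}$. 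Throughout I use that $\rank A(K) = [K_f:\Q]\,\dim_{K_f} A(K)\otimes\Q$ for any field $K$, which holds because $K_f$ acts by $\Q$-rational endomorphisms. This yields
\[
\rank A_f(\Q(\zeta_M)) = [K_f:\Q]\sum_{\chi\in D}\frac{\rank A_{(f_\chi)^{\operatorname{new}}}(\Q)}{[K_{(f_\chi)^{\operatorname{new}}}:\Q]}.
\]

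Substituting this and absorbing the factor $[K_f:\Q]$, which is precisely the number of newforms in the class $G_\Q f$, I would spread the class-sum into a sum over all of $NF$, obtaining
\[
\rank J_1(M,MN)(\Q(\zeta_M)) = \sum_{f\in NF}\sum_{\chi\in D}\frac{\sigma_0(M^2N/N_f)}{[K_{(f_\chi)^{\operatorname{new}}}:\Q]}\,\rank A_{(f_\chi)^{\operatorname{new}}}(\Q);
\]
this step needs that the inner sum over $\chi$ is independent of the chosen representative of the class, which I address below. Next I would change variables for each fixed $\chi$: by \cref{twist_of_NF} the assignment $f\mapsto (f_\chi)^{\operatorname{new}}$ is the action of $\chi$ in the group $D$, hence a bijection of $NF$ with inverse given by $\chi^{-1}$. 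Writing $g = (f_\chi)^{\operatorname{new}}$, so that $f = (g_{\chi^{-1}})^{\operatorname{new}}$, and then renaming $\chi^{-1}$ as $\chi$, turns the rank of the twist into $\rank A_g(\Q)$ and moves the multiplicity onto the twisted level:
\[
\rank J_1(M,MN)(\Q(\zeta_M)) = \sum_{g\in NF}\sum_{\chi\in D}\frac{\sigma_0(M^2N/N_{(g_\chi)^{\operatorname{new}}})}{[K_g:\Q]}\,\rank A_g(\Q).
\]

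Finally I would regroup the outer sum back over $\overline{NF}$. Since $\rank A_g(\Q)$, $[K_g:\Q]$ and $N_g$ are invariant under Galois conjugation of $g$, the only quantity that varies inside a class is $N_{(g_\chi)^{\operatorname{new}}}$, so everything reduces to the claim that
\[
T_g := \sum_{\chi\in D}\sigma_0(M^2N/N_{(g_\chi)^{\operatorname{new}}})
\]
is constant on each Galois conjugacy class; granting this, a class $G_\Q g$ contributes $[K_g:\Q]$ equal terms, the factor $[K_g:\Q]$ cancels, and one lands exactly on the asserted formula. I expect the constancy of $T_g$ to be the main obstacle, because the left $G_\Q$-action and the right $D$-action on $NF$ do not commute on the nose. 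The remedy is the twisting identity $(g^\sigma)_\chi = (g_{\chi^{\sigma^{-1}}})^\sigma$, which one checks on $q$-expansions since both sides equal $\sum_n \chi(n)\sigma(a_n)q^n$; applying the newform operation shows that $(g^\sigma)_\chi$ and $g_{\chi^{\sigma^{-1}}}$ have Galois-conjugate newforms, hence equal levels, so $T_{g^\sigma} = \sum_\chi \sigma_0(M^2N/N_{(g_{\chi^{\sigma^{-1}}})^{\operatorname{new}}})$, and reindexing by the bijection $\chi\mapsto\chi^{\sigma^{-1}}$ of $D$ recovers $T_g$. The same reindexing justifies the representative-independence invoked above, and apart from this point the argument is routine bookkeeping, the only remaining care being the consistent conversion between $K_f$-dimensions and $\Z$-ranks.
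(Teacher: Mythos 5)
Your proof is correct and follows essentially the same route as the paper's: \cref{decomposition_J1MN} to decompose the Jacobian, spreading the class sum over all of $NF$ via $K_f$-dimensions, \cref{twist,new-eigen} to reduce to ranks of associated newform quotients over $\Q$, and the bijection of \cref{twist_of_NF} to reindex the twisted levels. The one place you go beyond the paper is your explicit verification that $\sum_{\chi\in D}\sigma_0(M^2N/N_{(g_\chi)^{\operatorname{new}}})$ is constant on Galois conjugacy classes (via the identity $(g^\sigma)_\chi = (g_{\sigma^{-1}\circ\chi})^\sigma$ and reindexing $D$), a point the paper's proof passes over silently in its final regrouping step, and which is in fact needed even for the right-hand side of the stated formula to be independent of the chosen representative $f$.
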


Note that, decomposing $J_\Delta$ into simple factors as in \cref{decomposition_J1MN}, we have the following formula:
\begin{equation*}
\rank J_1(M,MN)(\Q(\zeta_M)) = \sum_{G_\Q f \in \overline{NF}} \sigma_0(M^2N/N_f) \rank A_f(\Q(\zeta_M)).
\end{equation*}
Thus using \cref{twist} we can compute the rank of $J_1(M,MN)(\Q(\zeta_M))$.
However, in order to compute $\rank J_1(M,MN)(\Q(\zeta_M))$ using this formula,
we need to compute the newform $g$ associated with the twist $f_\chi$ for every $\chi : (\Z/M)^* \to \C^*$
and for every $f$.
The importance of \cref{computing/Qzeta} is that
we only need to compute the newform associated with the twist $f_\chi$ only for $f$ satisfying
that $\rank A_f(\Q)$ is nonzero.

Before proving the proposition, we compute $\rank J_1(M,MN)(\Q(\zeta_M))$ for some $(M,N)$
using \cref{computing/Qzeta}.

\begin{example}[The rank of $J(11)(\Q(\zeta_{11}))$] \label{J(11)}
Let $\Delta$ be the subgroup as in \cref{crucial} for $(M,N) = (11,1)$.
According to \cite{LMFDB}, $J_\Delta$ has only one simple factor,
say $A$, whose rank over $\Q$ is nonzero.
($A = A_f$ for $f$ a newform in the Galois conjugacy class of newforms labeled "121.2.a.b" in \cite{LMFDB}.)
The newform $f$ has analytic rank $1$,
and the abelian variety $A$ is of dimension $1$.
Therefore by \cite[Corollary C]{Kolyvagin}, we have that $\rank A(\Q) = 1$.
Again according to \cite{LMFDB}, for every $\chi : (\Z/11)^* \to \C^*$,
the newform associated with $f_\chi$ has level $121$.
Therefore since there are $10$ characters $\chi : (\Z/11)^* \to \C^*$,
we have that $\rank J(11)(\Q(\zeta_{11})) = 10$.
\end{example}

\begin{example}[The rank of $J(14)(\Q(\zeta_{14}))$] \label{J(14)}
Let $\Delta$ be the subgroup as in \cref{crucial} for $(M,N) = (14,1)$.
According to \cite{LMFDB}, $J_\Delta$ has only one simple factor,
say $A$, whose rank over $\Q$ is nonzero.
($A = A_f$ for $f$ a newform in the Galois conjugacy class of newforms labeled "196.2.a.a" in \cite{LMFDB}.)
The newform $f$ has analytic rank $1$,
and the abelian variety $A$ is of dimension $1$.
Therefore by \cite[Corollary C]{Kolyvagin}, we have that $\rank A(\Q) = 1$.
Again according to \cite{LMFDB},
the newforms associated with $f_\chi$ have level $28$
for characters $\chi : (\Z/7)^* \to \C^*$ of order $6$
(there are $2$ such characters),
and the newforms associated with $f_\chi$ have level $196$
for other characters
(there are $4$ such characters).
Therefore we have that $\rank J(14)(\Q(\zeta_{14})) = 2\sigma_0(196/28) + 4\sigma_0(196/196) = 8$.
\end{example}

\begin{example}[The rank of $J(15)(\Q(\zeta_{15}))$] \label{J(15)}
Let $\Delta$ be the subgroup as in \cref{crucial} for $(M,N) = (15,1)$.
According to \cite{LMFDB}, $J_\Delta$ has two simple factors,
say $A_1$ and $A_2$, whose ranks over $\Q$ are nonzero.
($A_1 = A_{f_1}$ for $f_1$ a newform in the Galois conjugacy class of newforms labeled "225.2.a.a",
and $A_2 = A_{f_2}$ for $f_2$ in the class labeled "225.2.a.c" in \cite{LMFDB}.)
The newforms $f_i$ have analytic ranks $1$,
and the abelian varieties $A_i$ are of dimension $1$.
Therefore by \cite[Corollary C]{Kolyvagin}, we have that $\rank A_i(\Q) = 1$.
Again according to \cite{LMFDB},
for $f = f_1$,
the newform associated with $f_\chi$ has level $75$
for every character $\chi : (\Z/15)^* \to \C^*$ of conductor $3$ or $15$
(there are $4$ such characters),
and the newforms associated with $f_\chi$ have level $225$
for other characters
(there are $4$ such characters).
Next for $f = f_2$, a newform associated with $f_\chi$ has level $225$
for every $\chi : (\Z/15)^* \to \C^*$.
Therefore we have that $\rank J(15)(\Q(\zeta_{15})) = 4\sigma_0(225/75) + 12\sigma_0(225/225) = 20$.
\end{example}

For the subgroup $\Delta$ as in \cref{crucial} for $(M,N) = (13,1)$,
the modular Jacobian variety $J_\Delta$ has only one simple factor $A_f$ whose Mordell--Weil
rank over $\Q$ is nonzero.
The abelian variety $A_f$ has the dimension three and $f$ has the analytic rank one.
If Birch--Swinnerton-Dyer conjecture is true, then it follows that $\rank A_f(\Q) = 3$,
and thence we also can compute the rank of $J(13)(\Q(\zeta_{13}))$.
For a modular abelian variety $A_f$ if its L-function has a simple pole at $s = 1$,
then it seems that we obtain $\dim_{K_f} A_f (\Q) \otimes \Q = 1$ unconditionally.
We, however, could not find a reference.
Newforms of low levels have low analytic ranks.
For example, the newforms of level less than $389$ have analytic ranks at most $1$.
(See \cite{LMFDB}.)
Hence with this proposition we can easily compute the Mordell--Weil ranks of many modular Jacobian varieties.

\begin{proof}[Proof of \cref{computing/Qzeta}]
First by \cref{decomposition_J1MN} one obtains
\begin{equation*}
\rank J_1(M,MN)(\Q(\zeta_M)) = \sum_{G_\Q f \in \overline{NF}} \sigma_0(M^2N/N_f) \rank A_f(\Q(\zeta_M)).
\end{equation*}
For newforms $f$ and $g$, if these are Galois conjugate to each other,
then the fields $K_f$ and $K_g$ generated by their Fourier coefficients are isomorphic to each other,
and also the modular abelian varieties $A_f$ and $A_g$ are isogenous to each other.
Moreover for a newform $f$, the dimension of $A_f$ equals to $[K_f : \Q]$.
Therefore we have
\begin{equation} \label{eq:computing/Qzeta:1}
\rank J_1(M,MN)(\Q(\zeta_M)) = \sum_{f \in NF} \sigma_0(M^2N/N_f) \dim_{K_f} A_f(\Q(\zeta_M)) \otimes \Q.
\end{equation}
For each $f \in NF$, by \cref{twist,new-eigen} we have
\begin{equation*}
\dim_{K_f} A_f(\Q(\zeta_M)) \otimes \Q =
\sum_{\nu \in D} \dim_{K_{(f_\nu)^{\operatorname{new}}}} A_{(f_\nu)^{\operatorname{new}}}(\Q) \otimes \Q.
\end{equation*}
Thus combining them together we obtain
\begin{equation*}
\rank J_1(M,MN)(\Q(\zeta_M)) =
\sum_{f \in NF} \sum_{\nu \in D} \sigma_0(M^2N/N_f)
\dim_{K_{(f_\nu)^{\operatorname{new}}}} A_{(f_\nu)^{\operatorname{new}}}(\Q) \otimes \Q.
\end{equation*}
Since for every $\nu \in D$, the map
\begin{equation*}
\begin{aligned}
NF & \to NF \\
f  & \mapsto (f_\nu)^{\operatorname{new}}
\end{aligned}
\end{equation*}
is well-defined and bijective by \cref{twist_of_NF}, we obtain
\begin{equation*}
\rank J_1(M,MN)(\Q(\zeta_M)) =
\sum_{f \in NF} \sum_{\nu \in D} \sigma_0(M^2N/N_{(f_\nu)^{\operatorname{new}}}) \dim_{K_{f}} A_{f}(\Q) \otimes \Q.
\end{equation*}
Thus again by the same reason as in \cref{eq:computing/Qzeta:1},
the result follows.
\end{proof}

Using the same method as in \cite[Theorem 3.1, Remark 3.4]{DEvHMZB}, for fixed integer $r$,
we can get a necessary condition for that $\rank J_0(N)(\Q) = r$, assuming the Birch--Swinnerton-Dyer conjecture.
For a prime $p$, let $g_0^+(p)$ be the genus of the modular curve $X_0^+(p)$,
which is the quotient of the modular curve $X_0(p)$ by the Atkin--Lenner involution.

\begin{lemma} \label{rank_ge_genus}
If the Birch--Swinnerton-Dyer conjecture is true, then the following inequality holds:
\begin{equation*}
\rank J_0(p)(\Q) \ge g_0^+(p).
\end{equation*}
\end{lemma}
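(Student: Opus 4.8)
The plan is to combine the isogeny decomposition of $J_0(p)$ into simple factors $A_f$ with a detection of half of the newforms via the sign of the functional equation, which is governed by the Atkin--Lehner involution $w_p$, and then to convert analytic rank into algebraic rank using the assumed Birch--Swinnerton-Dyer conjecture. Since $p$ is prime, every weight-$2$ newform of level dividing $p$ has level exactly $p$ (there are no nonzero weight-$2$ cusp forms of level $1$) and trivial character, so by the decomposition \eqref{eq:simple_decomposition} applied to $\Gamma_0(p)$ the variety $J_0(p)$ is isogenous to $\bigoplus_{G_\Q f} A_f$, the sum over Galois conjugacy classes of newforms $f$ of level $p$. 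Hence
\[
\rank J_0(p)(\Q) = \sum_{G_\Q f} \rank A_f(\Q).
\]

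Next I would single out the factors attached to $X_0^+(p)$. The involution $w_p$ acts on $S_2(\Gamma_0(p))$ with eigenvalues $\pm 1$, and the holomorphic differentials on the quotient $X_0^+(p) = X_0(p)/w_p$ are exactly the $w_p$-invariant ones, so $g_0^+(p) = \dim S_2(\Gamma_0(p))^{w_p = +1}$. For a newform $f$ of prime level $p$ and weight $2$ the eigenvalue is $w_p f = \varepsilon_p f$ with $\varepsilon_p = -a_p(f) \in \{\pm 1\}$, a rational number fixed by Galois conjugation; thus each Galois orbit lies entirely inside one eigenspace, and summing $\dim A_f = [K_f:\Q]$ over the invariant orbits gives
\[
g_0^+(p) = \sum_{\substack{G_\Q f \\ w_p f = f}} \dim A_f.
\]

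The analytic input is the functional equation: for a weight-$2$ newform of level $p$ the completed $L$-function satisfies $\Lambda(f,s) = -\varepsilon_p\,\Lambda(f,2-s)$, so the root number at the center $s=1$ equals $-\varepsilon_p$. When $\varepsilon_p = +1$ the sign is $-1$, forcing $L(f,1) = 0$ and hence $\operatorname{ord}_{s=1} L(f,s) \ge 1$. Since Galois-conjugate forms have conjugate $L$-functions vanishing to the same order, the analytic rank of $A_f$ is $[K_f:\Q]\cdot\operatorname{ord}_{s=1}L(f,s) \ge [K_f:\Q]$; assuming the Birch--Swinnerton-Dyer conjecture for $A_f$ this yields $\rank A_f(\Q) \ge [K_f:\Q] = \dim A_f$ for every $f$ with $w_p f = f$. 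Discarding the nonnegative contributions of the $w_p = -1$ orbits then gives
\[
\rank J_0(p)(\Q) = \sum_{G_\Q f} \rank A_f(\Q) \ge \sum_{\substack{G_\Q f \\ w_p f = f}} \rank A_f(\Q) \ge \sum_{\substack{G_\Q f \\ w_p f = f}} \dim A_f = g_0^+(p).
\]

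The main obstacle is the sign computation: I must pin down that the eigenvalue $\varepsilon_p = +1$ is precisely the condition producing root number $-1$, hence odd-order vanishing at $s=1$, and match this eigenspace cleanly with the genus $g_0^+(p)$ through the identification of $w_p$-invariant differentials with differentials on the quotient curve. Once the functional equation and the dimension count are in place, the remainder is the structural isogeny decomposition together with a single application of the assumed conjecture, which is why the inequality is only conditional.
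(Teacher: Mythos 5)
Your proof is correct and rests on the same pillars as the paper's: the decomposition of $J_0(p)$ up to isogeny into the modular abelian varieties $A_f$, the identification of $g_0^+(p)$ with the $w_p$-invariant part of $S_2(\Gamma_0(p))$, the odd functional equation for $w_p$-fixed newforms, and the Birch--Swinnerton-Dyer conjecture. The differences are mostly organizational, but two are worth noting. First, the paper splits $J_0(p)$ up to isogeny as $J_0^-(p)\times J_0^+(p)$ and works with the quotient Jacobian $J_0^+(p)$, whose dimension is $g_0^+(p)$ by definition, whereas you keep the full decomposition of $J_0(p)$ and pick out the $w_p$-fixed Galois orbits; these bookkeeping schemes are equivalent. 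Second, and more substantively, the paper uses the conjecture only through the implication ``analytic rank positive implies $\rank A_f(\Q)>0$'' (the converse of Kato's theorem), and then boosts a nonzero rank to $\rank A_f(\Q)\ge [K_f:\Q]=\dim A_f$ by observing that $A_f(\Q)\otimes\Q$ is a vector space over $K_f$; you instead invoke the full equality of algebraic and analytic rank, summing a contribution of $1$ from each Galois conjugate. Your route is fine for the lemma as stated, but the paper's weaker use of the conjecture is what justifies its closing remark that the results of this section require only the converse of Kato's theorem rather than the full conjecture. Finally, your parenthetical claim that Galois-conjugate newforms have $L$-functions vanishing to the same order at $s=1$ is, as a general statement, an open problem; fortunately you do not need it, since each conjugate $f^\sigma$ has the same (rational, hence Galois-stable) Atkin--Lehner eigenvalue, so each has root number $-1$ and vanishes to order at least $1$, which is all your inequality uses.
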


\begin{proof}
The modular Jacobian variety $J_0(p)$ is isogenous to $J^-_0(p) \times J_0^+(p)$.
Hence we have $\rank J_0(p)(\Q) \ge \rank J_0^+(p)(\Q)$.
The abelian variety $J_0^+(p)$ is the Jacobian variety of the modular curve $X_0^+(p)$,
and is isogenous to a product $\oplus_f A_f$, where the direct sum is taken over all Galois conjugacy classes of the newforms
of level $\Gamma_0(p)$ fixed by the Atkin-Lehner involution.
Thus for every simple factor of $J_0^+(p)$, its analytic rank is odd, and in particular is nonzero.
Hence if the Birch--Swinnerton-Dyer conjecture is true, then every simple factor of $J_0^+(p)$ has the nonzero Mordell--Weil rank.
Moreover, for every such newform $f$, since an order of $K_f$ acts on $A_f$,
we have $\rank A_f(\Q) \ge [K_f : \Q]$, which is equal to the dimension of $A_f$.
Thus we obtain
\begin{equation*}
\begin{aligned}
\rank J_0^+(p) (\Q) & = \sum_f \rank A_f(\Q) \\
& \ge \sum_f \dim A_f \\
& = \dim J_0^+(p),
\end{aligned}
\end{equation*}
where the sum is taken over all Galois conjugacy classes of the newforms of level $\Gamma_0(p)$ fixed by the Atkin-Lehner involution.
Thus the result.
\end{proof}

Note that \cref{rank_ge_genus} does not hold for composite numbers,
for example $\rank J_0(28)(\Q) = 0$ but $g_0^+(28) = 1$.
We also note that we know the complete list of prime numbers $p$
so that the genera $g_0^+(p)$ are less than $7$, see \cite[Proposition 4.5]{AABCCKW}.

\begin{lemma} \label{lem:strictly<}
Let $N$ be a positive integer.
\begin{enumerate}
\item Write $N = Mp^e$ for a positive integer $M$, for a prime $p$, and for $e \ge 1$.
For a newform $f$ of level $N_f$ dividing $M$, let $m_f$ be the prime-to-$p$-part of $M/N_f$.
Then $J_0(N)$ contains $J_0(M) \oplus (\oplus_f A_f^{e \sigma_0(m_f)})$ up to isogeny,
where $f$ runs over the Galois conjugacy classes of the newforms of level dividing $M$. \label{lem:strictly<:item1}
\item Assume that we can write $N = M_1 M_2$ for relatively prime positive integers.
Then $J_0(N)$ contains $J_0(M_1)^{\sigma_0(M_2)} \oplus J_0(M_2)^{\sigma_0(M_1)}$ up to isogeny. \label{lem:strictly<:item2}
\item Let $M$ be a positive proper divisor of $N$.
If the rank of $J_0(N)(\Q)$ is nonzero, then $\rank J_0(M)(\Q) < \rank J_0(N)(\Q)$. \label{lem:strictly<:item3}
\end{enumerate}
\end{lemma}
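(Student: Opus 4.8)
The common engine for all three parts is the isogeny decomposition recorded in the preliminaries: applying \eqref{eq:simple_decomposition} to $X_0(N) = X_\Delta$ with $\Delta = (\Z/N)^*/\{\pm 1\}$, so that the character condition forces the trivial character, gives
\[
J_0(N) \sim \bigoplus_{d \mid N}\ \bigoplus_{f} A_f^{\sigma_0(N/d)},
\]
where the inner sum runs over Galois conjugacy classes of newforms $f$ of level $\Gamma_0(d)$. Equivalently, each Galois class of newforms $f$ of level $N_f \mid N$ contributes $A_f$ with multiplicity $\sigma_0(N/N_f)$. The plan is to reduce every assertion to comparing these multiplicities, for which the only arithmetic inputs I need are the multiplicativity of $\sigma_0$ and the vanishing $S_2(\Gamma_0(1)) = 0$, i.e.\ the absence of weight-$2$ newforms of level $1$.

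For part \eqref{lem:strictly<:item1}, I would fix a Galois class of newforms $f$ of level $N_f \mid M$ and write $M/N_f = p^a m_f$ with $m_f$ prime to $p$, so that $\sigma_0(M/N_f) = (a+1)\sigma_0(m_f)$ is the multiplicity of $A_f$ in $J_0(M)$. Adjoining $A_f^{e\sigma_0(m_f)}$ raises this to $(a+1+e)\sigma_0(m_f)$; on the other hand $N/N_f = p^{a+e} m_f$, so the multiplicity of $A_f$ in $J_0(N)$ is $\sigma_0(N/N_f) = (a+e+1)\sigma_0(m_f)$, and the two agree. Hence $J_0(M)\oplus(\oplus_f A_f^{e\sigma_0(m_f)})$ accounts exactly for the part of $J_0(N)$ coming from newforms of level dividing $M$; the remaining factors of $J_0(N)$ (those $A_g$ with $N_g \mid N$ but $N_g \nmid M$) furnish the complementary summand, proving the containment. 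Part \eqref{lem:strictly<:item2} I would handle by the same bookkeeping: for a newform $f$ of level $N_f \mid M_1$, coprimality gives $\sigma_0(N/N_f) = \sigma_0(M_1/N_f)\,\sigma_0(M_2)$ by multiplicativity, which is precisely the multiplicity of $A_f$ in $J_0(M_1)^{\sigma_0(M_2)}$; symmetrically for $N_f \mid M_2$. Because $\gcd(M_1,M_2)=1$, a newform whose level divides both would have level $1$, of which there are none, so the two families are disjoint and no factor is double counted; the leftover factors of $J_0(N)$ (levels divisible by primes of both $M_1$ and $M_2$) give the containment.

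For part \eqref{lem:strictly<:item3} I would argue directly with ranks. Taking Mordell--Weil ranks in the decomposition (ranks are additive under products and invariant under isogeny, and conjugate newforms give isogenous $A_f$) yields
\[
\rank J_0(N)(\Q) = \sum_{N_f \mid N} \sigma_0(N/N_f)\,\rank A_f(\Q),
\]
summed over Galois classes of newforms of level dividing $N$, and likewise for $M$. Subtracting, every term of $\rank J_0(N)(\Q) - \rank J_0(M)(\Q)$ is nonnegative: for $N_f \mid M$ one has $M/N_f \mid N/N_f$, hence $\sigma_0(N/N_f) \ge \sigma_0(M/N_f)$, while the newforms with $N_f \mid N$, $N_f \nmid M$ contribute further nonnegative terms. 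To upgrade to strict inequality from $\rank J_0(N)(\Q) \ne 0$, I would choose a Galois class $f_0$ with $N_{f_0} \mid N$ and $\rank A_{f_0}(\Q) > 0$. If $N_{f_0} \nmid M$, then $\sigma_0(N/N_{f_0})\rank A_{f_0}(\Q)$ is a strictly positive summand absent from $\rank J_0(M)(\Q)$. If instead $N_{f_0} \mid M$, then since $M$ is a \emph{proper} divisor of $N$ we have $M/N_{f_0} \mid N/N_{f_0}$ with $M/N_{f_0} < N/N_{f_0}$; as the larger number is itself a divisor not dividing the smaller, $\sigma_0(N/N_{f_0}) > \sigma_0(M/N_{f_0})$, so that term again contributes strictly positively. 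In either case $\rank J_0(M)(\Q) < \rank J_0(N)(\Q)$.

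The argument is mostly bookkeeping, and the step I expect to require the most care is part \eqref{lem:strictly<:item3}: one must pin down that $\rank J_0(N)(\Q)$ is exactly the $\sigma_0(N/N_f)$-weighted sum of the $\rank A_f(\Q)$ over Galois classes, and then supply the elementary but essential monotonicity $\sigma_0(d_1) < \sigma_0(d_2)$ for $d_1 \mid d_2$ with $d_1 \ne d_2$. The subtle thread running through all three parts is keeping the character condition straight — one is decomposing $J_0$ rather than a general $J_\Delta$, so only trivial-character newforms occur — together with $S_2(\Gamma_0(1)) = 0$, which is what makes the two families in part \eqref{lem:strictly<:item2} disjoint.
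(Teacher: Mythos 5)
Your proof is correct, and for parts (1) and (2) it is essentially the paper's own argument: both proofs run on the decomposition $J_0(N) \sim \oplus_f A_f^{\sigma_0(N/N_f)}$ over trivial-character newforms and reduce everything to comparing multiplicities, via the identity $\sigma_0(N/N_f) = \sigma_0(M/N_f) + e\,\sigma_0(m_f)$ in (1) and multiplicativity of $\sigma_0$ across the coprime factorization in (2). (One small point in your favor: you justify the disjointness of the two families in (2) by $S_2(\Gamma_0(1)) = 0$; the paper leaves this implicit in ``since $M_1$ and $M_2$ are relatively prime.'') Part (3) is where you take a genuinely different route. The paper dispatches the case $\rank J_0(M)(\Q) = 0$, then picks a positive-rank simple factor $A$ of $J_0(M)$ and deduces from part (1) that $J_0(M) \times A$ is contained in $J_0(N)$ up to isogeny. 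You instead never invoke part (1): you write $\rank J_0(N)(\Q)$ as the $\sigma_0$-weighted sum of the $\rank A_f(\Q)$, choose a positive-rank factor $f_0$ of $J_0(N)$, and conclude by the strict monotonicity $\sigma_0(d_1) < \sigma_0(d_2)$ for $d_1 \mid d_2$, $d_1 \ne d_2$. Your version buys a genuine simplification: part (1) as stated only covers $N = Mp^e$, whereas in part (3) the quotient $N/M$ may be divisible by several primes, so the paper's appeal to part (1) tacitly requires iterating it or passing through an intermediate divisor such as $N/p^{v_p(N/M)}$; your direct multiplicity comparison sidesteps this gap entirely, at the modest cost of redoing a little of the bookkeeping that part (1) already packaged.
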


\begin{proof}
\begin{enumerate}
\item
First we show the first statement.
Let $M$ be a positive integer, $p$ a prime, $e \ge 1$ an integer, and $N = Mp^e$.
The modular Jacobian variety $J_0(M)$ decompose as $\oplus_f A_f^{\sigma_0(M/N_f)}$,
where $f$ runs over the Galois conjugacy classes of the newforms of level dividing $M$.
Since $M$ divides $N$, for each such $f$, the modular Jacobian variety $J_0(N)$ also contains $A_f$ up to isogeny,
with the multiplicity $\sigma_0(N/N_f)$.
Thus it suffices to show, for each $f$ as above and for $m_f$ as in the statement,
that $\sigma_0(N/N_f) = \sigma_0(M/N_f) + e \sigma_0(m_f)$,
which is elementary:
Write $M/N_f = p^b m_f$ for a nonnegative integer $b$.
Then since $m_f$ is prime-to-$p$, we obtain $\sigma_0(N/N_f) = \sigma_0(p^{e+b}) \sigma_0(m_f) = (e+b+1) \sigma_0(m_f)
= e \sigma_0(m_f) + (b+1) \sigma_0(m_f) = e \sigma_0(m_f) + \sigma_0(M/N_f)$.

\item
For the second statement let $M_i$ be positive integers as in the statement.
Since $M_1$ and $M_2$ are relatively prime to each other, considering the simple decomposition of $J_0(N)$,
we obtain that $J_0(N)$ contains $(\oplus_f A_f^{\sigma_0(N/N_f)}) \oplus (\oplus_g A_g^{\sigma_0(N/N_g)})$ up to isogeny,
where $f$ runs over the Galois conjugacy classes of the newforms of level dividing $M_1$,
and where $g$ runs over those of level dividing $M_2$.
Thus again since $M_i$ are relatively prime to each other, we obtain $\sigma_0(N/N_f) = \sigma_0(M_1 / N_f) \sigma_0(M_2)$
and $\sigma_0(N/N_g) = \sigma_0(M_2 / N_g) \sigma_0(M_1)$.
Since $\sigma_0(M_1 / N_f)$ (and $\sigma_0(M_2 / N_g)$) is the multiplicity of the modular abelian variety $A_f$
(and $A_g$) as a simple factor of $J_0(M_1)$ (and $J_0(M_2)$ respectively), the result follows.

\item
For the third statement, let $M$ be a positive proper divisor of $N$,
and assume that the rank of $J_0(N)(\Q)$ is nonzero.
If $\rank J_0(M)(\Q) = 0$, then there is nothing to show.
Assume that $\rank J_0(M)(\Q) > 0$.
In this case there exists a simple factor $A$ of $J_0(M)$ whose Mordell--Weil rank is nonzero.
By the first statement, for each simple factor $A$ of $J_0(M)$, we have that
$J_0(M) \times A$ is contained in $J_0(N)$ up to isogeny.
Thus the result.
\end{enumerate}
\end{proof}

\begin{proposition} \label{rank_tends_infinity}
Assume that the Birch--Swinnerton-Dyer conjecture is true.
Let $r$ be an integer.
Then there exist only finitely many integers $N$ such that $\rank J_0(N)(\Q) = r$,
i.e., the rank of $J_0(N)(\Q)$ tends to infinity as the level $N$ tends to infinity.
\end{proposition}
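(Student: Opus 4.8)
The plan is to prove the equivalent growth statement: for every integer $R$, the set $\{N : \rank J_0(N)(\Q) \le R\}$ is finite. This implies that $\rank J_0(N)(\Q) \to \infty$ with $N$, and in particular that each fibre $\{N : \rank J_0(N)(\Q) = r\}$ is finite. Throughout I would use that for $M \mid N$ the variety $J_0(M)$ is an isogeny factor of $J_0(N)$ (immediate from the simple decomposition, cf.\ \cref{lem:strictly<:item1}), so that $\rank J_0(M)(\Q) \le \rank J_0(N)(\Q)$.

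First I would bound the prime divisors. If $p \mid N$ then $\rank J_0(p)(\Q) \le \rank J_0(N)(\Q) \le R$, and by \cref{rank_ge_genus} (which is where the Birch--Swinnerton-Dyer hypothesis enters) this forces $g_0^+(p) \le R$. Since $g_0^+(p) \to \infty$ as $p \to \infty$ --- only finitely many primes have $g_0^+(p)$ below any fixed bound, cf.\ the classification cited after \cref{rank_ge_genus} --- the prime divisors of every such $N$ lie in a fixed finite set $P_R$.

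Next I would reduce everything to the rank-zero case, namely the claim $(\star)$ that $\{N : \rank J_0(N)(\Q) = 0\}$ is finite. Granting $(\star)$, put $C = \max\{\Omega(M) : \rank J_0(M)(\Q) = 0\}$, where $\Omega$ counts prime factors with multiplicity. Given $N$ with $\rank J_0(N)(\Q) = r \ge 1$, choose a chain $1 = d_0 \mid d_1 \mid \dots \mid d_k = N$ with $k = \Omega(N)$ and each $d_i/d_{i-1}$ prime. The ranks $\rank J_0(d_i)(\Q)$ are non-decreasing, and once positive they are \emph{strictly} increasing by \cref{lem:strictly<:item3}. Letting $j$ be the first index with positive rank, $d_{j-1}$ has rank zero, so $j-1 \le C$ by $(\star)$, while strict monotonicity gives $r \ge 1 + (k-j)$, hence $\Omega(N) = k \le r + C$. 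Combined with the prime bound $P_r$, only finitely many $N$ remain, completing the reduction.

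It remains to prove $(\star)$, and this is where I expect the main obstacle to lie. By the prime bound, the levels $N$ with $\rank J_0(N)(\Q) = 0$ are supported on the finite set $P_0 = \{p : g_0^+(p) = 0\}$, so it suffices to bound the exponents; this is controlled by the following seed statement: for each prime $p$ there is an exponent $a_0(p)$ with $\rank J_0(p^{a_0(p)})(\Q) > 0$. Granting it, $p^{a_0(p)} \mid N$ would force $\rank J_0(N)(\Q) > 0$, so $v_p(N) < a_0(p)$ for all $p \in P_0$ and $N$ divides the fixed integer $\prod_{p \in P_0} p^{a_0(p)}$. The hard part is producing this seed. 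Since $P_0$ is an explicit finite list, the cleanest route --- consonant with the computational methods already used in this paper and with the standing Birch--Swinnerton-Dyer hypothesis --- is to verify, for each $p \in P_0$, that some prime-power level $p^a$ carries a newform of positive analytic rank (equivalently $\rank J_0(p^a)(\Q) > 0$), a finite computation in Magma or via the LMFDB. Alternatively one may attempt a uniform argument generalizing \cref{rank_ge_genus}: since $g(X_0(p^a)) \to \infty$, a functional-equation and Atkin--Lehner count should exhibit, for large $a$, a newform of $p$-power level with root number $-1$ and hence odd, positive analytic rank. Making this rigorous requires handling the oldform contributions to the Atkin--Lehner action carefully, which is why I expect the finite verification to be the more reliable path.
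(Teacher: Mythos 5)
Your reduction machinery is correct, and it is a genuinely different packaging from the paper's: you bound the prime support of $N$ via \cref{rank_ge_genus}, then bound $\Omega(N)$ by combining monotonicity of ranks along divisor chains with the strict increase from \cref{lem:strictly<} \eqref{lem:strictly<:item3}, thereby reducing the whole statement to the single claim $(\star)$ that there are only finitely many $N$ with $\rank J_0(N)(\Q)=0$. The paper instead runs an induction on $r$: the primes of rank $r$ are finite in number by \cref{rank_ge_genus} together with the genus lower bound of \cite[Proposition 4.4]{AABCCKW}, and a composite $N$ of positive rank $r$ factors as $N = M_1M_2$ with $M_i>1$, where both factors have rank $<r$ by \cref{lem:strictly<} \eqref{lem:strictly<:item3}, so the induction hypothesis finishes. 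The two arguments have essentially the same content, and your version is arguably cleaner in that it isolates the base case explicitly.

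The genuine gap is that you never prove $(\star)$, which is exactly the $r=0$ case of the proposition and is where all the arithmetic content of the base case sits. Your reduction of $(\star)$ to the ``seed'' statement (for each prime $p$ with $g_0^+(p)=0$, some $J_0(p^a)$ has positive rank) is valid, but you then concede that producing the seed requires either a finite computation you have not carried out or a root-number argument you have not made rigorous; an expectation that a computation ``should'' succeed is not a proof. The paper closes precisely this hole by citation: \cite[Theorem 3.1]{DEvHMZB} determines the complete finite list $S_0$ of levels of rank zero, and the direction needed here (rank zero implies membership in the finite list) is exactly the direction that is valid under the Birch--Swinnerton-Dyer hypothesis the proposition assumes --- the same result the paper already invokes in \cref{rank0MN}. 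So the missing step is not a new difficulty to be overcome but an available theorem to be quoted; as written, however, your proof is incomplete at its crucial point, and your proposed substitute for the citation (searching prime-power levels for newforms of positive analytic rank, for each of the finitely many primes with $g_0^+(p)=0$) is both unexecuted and more laborious than simply using the known classification.
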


\begin{proof}
We show it by induction on $r$.
First by \cite[Theorem 3.1]{DEvHMZB}, the statement for $r = 0$ is true.
Next assume that $r > 0$ and suppose that there exist only finitely many integers $N$ such that $\rank J_0(N)(\Q) < r$.
Define
\begin{equation*}
h(x) = \frac{x - 5 \sqrt{x} + 4}{24} - \frac{\sqrt{x}}{\pi} (\log (16x) + 2).
\end{equation*}
We can compute that the function $h(x)$ is monotonically increasing for sufficiently large $x$, for example
for $x > 10^4$.
Since $h(x)$ tends to infinity as $x$ tends to infinity, we have that there exist only finitely integers $N$
such that $h(N) < r$.
Therefore by \cref{rank_ge_genus} and by \cite[Proposition 4.4]{AABCCKW},
there exist only finitely many primes $p$ such that $\rank J_0(p)(\Q) = r$.
By \cref{lem:strictly<} \eqref{lem:strictly<:item3}, if $N$ is a composite number then $N$ is of the form of $M_1 M_2$
for positive integers $M_i > 1$ satisfying that $\rank J_0(M_i)(\Q) < r$.
Therefore the induction hypothesis yields the result.
\end{proof}

Moreover the proof above inductively constructs, for each integer $r$, a finite set $S_r$ such that
if the rank of $J_0(N)(\Q)$ is equal to $r$ then $N \in S_r$.
Namely:

\begin{definition}
Let $S_0$ be the set of positive integers $N$ such that the rank of $J_0(N)(\Q)$ is zero
(\cite[Theorem 3.1]{DEvHMZB}).
For $r \ge 1$, assuming that we have defined the set $S_{r'}$ for every $r' < r$,
we define $S_r$ to be the set of positive integers $N$ satisfying that,
$N$ is either a prime such that $g_0^+(N) \le r$, or a composite number such that
every positive proper divisor $M$ of $N$ lies in the set $\cup_{i = 0}^{r-1} S_i$.
\end{definition}

For example we obtain that
\begin{equation*}
\begin{aligned}
S_0 = & \{ 1, 2, 3, 4, 5, 6, 7, 8, 9, 10, 11, 12, 13, 14, 15, 16, 17, 18, 19, 20, 21, 22, \\
& 23, 24, 25, 26, 27, 28, 29, 30, 31, 32, 33, 34, 35, 36, 38, 39, 40, 41, 42, 44, \\
& 45, 46, 47, 48, 49, 50, 51, 52, 54, 55, 56, 59, 60, 62, 63, 64, 66, 68, 69, 70, \\
& 71, 72, 75, 76, 78, 80, 81, 84, 87, 90, 94, 95, 96, 98, 100, 104, 105, 108, 110, \\
& 119, 120, 126, 132, 140, 144, 150, 168, 180 \}
\end{aligned}
\end{equation*}
by \cite[Theorem 3.1]{DEvHMZB}, and hence by definition we obtain
\begin{equation*}
\begin{aligned}
S_1 = & \{ 2, 3, 4, 5, 6, 7, 8, 9, 10, 11, 12, 13, 14, 15, 16, 17, 18, 19, 20, 21, 22, \\
& 23, 24, 25, 26, 27, 28, 29, 30, 31, 32, 33, 34, 35, 36, 37, 38, 39, 40, 41, 42, \\
& 43, 44, 45, 46, 47, 48, 49, 50, 51, 52, 53, 54, 55, 56, 57, 58, 59, 60, 61, 62, \\
& 63, 64, 65, 66, 68, 69, 70, 71, 72, 75, 76, 77, 78, 79, 80, 81, 82, 83, 84, 85, \\
& 87, 88, 89, 90, 91, 92, 93, 94, 95, 96, 98, 99, 100, 101, 102, 104, 105, 108, \\
& 110, 112, 115, 117, 118, 119, 120, 121, 123, 124, 125, 126, 128, 131, 132, 133, \\
& 135, 136, 138, 140, 141, 142, 143, 144, 145, 147, 150, 152, 153, 155, 156, 160, \\
& 161, 162, 165, 168, 169, 175, 177, 180, 187, 188, 189, 190, 192, 196, 200, 203, \\
& 205, 207, 208, 209, 210, 213, 216, 217, 220, 221, 225, 235, 238, 240, 243, 245, \\
& 247, 252, 253, 261, 275, 280, 287, 288, 289, 295, 299, 300, 315, 319, 323, 329, \\
& 341, 343, 355, 357, 360, 361, 377, 391, 403, 413, 437, 451, 475, 493, 497, 517, \\
& 527, 529, 533, 551, 589, 611, 649, 667, 697, 713, 767, 779, 781, 799, 833, 841, \\
& 893, 899, 923, 943, 961, 1003, 1081, 1121, 1189, 1207, 1271, 1349, 1357, 1363, \\
& 1457, 1633, 1681, 1711, 1829, 1927, 2059, 2201, 2209, 2419, 2773, 2911, 3337, \\
& 3481, 4189, 5041 \}
\end{aligned}
\end{equation*}

From statements above we can obtain a rough but easy-to-understand necessary condition for that $\rank J_0(N)(\Q) \le r$,
in the form of a lower bound of $\rank J_0(N)(\Q)$.

\begin{proposition} \label{lower_bound1}
Assume that the Birch--Swinnerton-Dyer conjecture is true.
Then $\rank J_0(N)(\Q) > -1 + \log_{180} N$.
\end{proposition}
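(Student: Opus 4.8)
The plan is to recast the statement as the multiplicative bound $N \le 180^{r(N)+1}$, where throughout I write $r(\cdot)=\rank J_0(\cdot)(\Q)$, and to prove it by strong induction on $N$; applying $\log_{180}$ then yields $r(N) \ge -1+\log_{180}N$. The inequality is in fact strict for every $N \neq 180$: for $N<180$ one has $r(N)\ge 0 > -1+\log_{180}N$ automatically, and for $N>180$ the multiplicative factors produced in the induction are always strictly below $180$. (Only at $N=180$ does equality occur, since $r(180)=0=-1+\log_{180}180$.) The base case is $r(N)=0$: then $N$ lies in the set $S_0$ exhibited above, whose largest element is $180$, so $N\le 180=180^{0+1}$.

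Now assume $r(N)\ge 1$ and that the bound holds for all smaller levels. If $N$ has a prime factor $q\le 180$ (hence $q\le 179$), then $N/q$ is a proper divisor, so \cref{lem:strictly<}~\eqref{lem:strictly<:item3} gives $r(N/q)\le r(N)-1$, and the inductive hypothesis gives $N/q\le 180^{r(N)}$; therefore $N=q\,(N/q)\le 179\cdot 180^{r(N)}<180^{r(N)+1}$. If instead $N=M_1M_2$ with $M_1,M_2>1$ coprime (each then necessarily exceeding $180$, since $\max S_0=180$ forces $r(M_i)\ge 1$), then \cref{lem:strictly<}~\eqref{lem:strictly<:item2} gives $r(N)\ge 2r(M_1)+2r(M_2)$, while the inductive hypothesis gives $M_i\le 180^{r(M_i)+1}$. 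Since $r(M_1)+r(M_2)\ge 2$, the exponents satisfy $r(M_1)+r(M_2)+2\le r(N)+1$, so $N\le 180^{r(M_1)+r(M_2)+2}\le 180^{r(N)+1}$.

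The remaining case is $N=p^a$ with $p>180$ a single prime, where the divisor recursion is powerless and one must instead exploit that such primes already carry large rank. From the simple decomposition of $J_0(p^a)$ into factors $A_f^{\sigma_0(p^a/N_f)}$ (as in the proof of \cref{lem:strictly<}), each newform of level $p$ occurs with multiplicity $\sigma_0(p^{a-1})=a$, whence $r(p^a)\ge a\,r(p)$. It therefore suffices to prove the prime estimate $r(p)\ge \log_{180}p$ for every prime $p>180$, for then $r(N)=r(p^a)\ge a\log_{180}p=\log_{180}N$ and $N\le 180^{r(N)}<180^{r(N)+1}$. For this I would invoke \cref{rank_ge_genus}, which gives $r(p)\ge g_0^+(p)$, and reduce to $g_0^+(p)\ge \log_{180}p$: beyond an explicit threshold this follows from the bound $g_0^+(p)\ge h(p)$ of \cite[Proposition 4.4]{AABCCKW}, with $h$ the function of \cref{rank_tends_infinity}, since $h(p)$ grows like $p/24$ and overtakes $\log_{180}p$ well before $180^2=32400$; for the finitely many primes $180<p<32400$ one has $\log_{180}p<2$, so it is enough that $g_0^+(p)\ge 2$, which is read off directly from the known list of primes with $g_0^+(p)\le 6$.

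The step I expect to be the main obstacle is precisely this prime estimate $g_0^+(p)\ge\log_{180}p$ for $p>180$. Asymptotically it is very comfortable, the genus growing linearly in $p$ while the target grows only logarithmically; the real content lies in the bookkeeping in the moderate range just above $180$, where the clean lower bound $h$ is still negative and one must instead appeal to the explicit tabulation of small-genus primes to guarantee $g_0^+(p)\ge 2$. Once this is secured, the multiplicity inequality $r(p^a)\ge a\,r(p)$ together with the two structural parts of \cref{lem:strictly<} assembles the general case with no further difficulty.
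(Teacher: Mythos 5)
Your proof is correct, and it takes a genuinely different route from the paper's, using the same ingredients but a different induction. The paper inducts on the rank $r$, proving that $\rank J_0(N)(\Q)\le r$ implies $N\le 180^{r+1}$: its prime case runs through the monotonicity of $h(180^x)-x$ (positive at $x=7$) together with the full genus-$\le 6$ list of \cite[Proposition 4.5]{AABCCKW}, and its composite case must treat $r=1$ separately via the explicit computation of $S_1$, because the estimate $180\le 180^{r/2}$ used in its coprime-factorization case breaks down at $r=1$. You instead do strong induction on $N$, and your key structural move --- peeling off a prime factor $q\le 180$ and applying \cref{lem:strictly<}~\eqref{lem:strictly<:item3} to get $\rank J_0(N/q)(\Q)\le\rank J_0(N)(\Q)-1$, hence $N\le 179\cdot 180^{\rank J_0(N)(\Q)}$ --- handles at once every $N$ with a small prime factor, eliminates the floor-function bookkeeping, and removes any need for $S_1$. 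Your remaining cases use the same inputs as the paper (\cref{lem:strictly<}~\eqref{lem:strictly<:item2}, the multiplicity bound $\rank J_0(p^a)(\Q)\ge a\,\rank J_0(p)(\Q)$, \cref{rank_ge_genus}, and the bounds of \cite{AABCCKW}); your prime estimate $g_0^+(p)\ge\log_{180}p$ for $p>180$ needs only the genus-$\le 1$ primes from \cite[Proposition 4.5]{AABCCKW} plus a routine check that $h(x)\ge\log_{180}x$ beyond $180^2$, which is of the same nature as, and no harder than, the paper's numerical verification. Finally, your remark about $N=180$ is a genuine correction: since $180\in S_0$, one has $\rank J_0(180)(\Q)=0=-1+\log_{180}180$, so the strict inequality as stated in the proposition actually fails there, and the paper's own argument (which only yields $N\le 180^{r+1}$) delivers the non-strict bound; your version --- strict for all $N\ne 180$, with equality exactly at $N=180$ --- is the statement that the method actually proves.
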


\begin{proof}
We show the statement by showing that,
for a nonnegative integer $r$, if $\rank J_0(N)(\Q) \le r$, then we obtain $N \le 180^{r+1}$.
First we show a sharper result for prime numbers:
Namely, for a prime number $p$ and for a positive integer $r$, if $\rank J_0(p)(\Q) \le r$ then $p \le 180^r$,
and if $\rank J_0(p)(\Q) = 0$ then $p \le 180$.
Let $r$ be a nonnegative integer and $p$ a prime.
Define
\begin{equation*}
h(x) = \frac{x - 5 \sqrt{x} + 4}{24} - \frac{\sqrt{x}}{\pi} (\log (16x) + 2).
\end{equation*}
Let $f(x) = h(180^x) - x$.
Then we can compute that $f(x)$ is monotonically increasing for $x \ge 2$, and that $f(7) > 0$.
Hence for $r \ge 7$, if a prime $p$ satisfies $p > 180^r$ then $r < h(180^r) < h(p)$.
Therefore by \cref{rank_ge_genus} and by \cite[Proposition 4.4]{AABCCKW}, it shows our claim for $r \ge 7$.
For $1 \le r \le 6$, by \cite[Proposition 4.5]{AABCCKW} we have that if $g_0^+(p) \le r$ then $p \le 180^r$,
thence our claim for $1 \le r \le 6$.
Finally if $\rank J_0(p)(\Q) = 0$ then by \cite[Theorem 3.1]{DEvHMZB} we have $p \le 180$.

We show the statement for general case by induction on $r$.
Let $N$ be an integer so that $\rank J_0(N)(\Q) \le r$.
We may assume that $N$ is a composite number.
In the case $r = 0$, the statement follows from \cite[Theorem 3.1]{DEvHMZB}.
For $r = 1$, we computed $S_1$ explicitly above, hence the result. 
Let $r \ge 2$ and suppose that the statement holds for every $r' < r$.
By the induction hypothesis, we may assume that $\rank J_0(N)(\Q) = r$.

First assume that we can write $N = M_1 M_2$ for relatively prime integers $M_i > 1$.
Then by \cref{lem:strictly<} \eqref{lem:strictly<:item2} we have that $2 \rank J_0(M_1)(\Q) + 2 \rank J_0(M_2)(\Q) \le \rank J_0(N)(\Q)$.
Thus for each $i$ the rank of $J_0(M_i)(\Q)$ is at most $r/2$,
and so the induction hypothesis implies that $M_i \le 180^{\lfloor r/2 \rfloor + 1}$,
where $\lfloor - \rfloor$ is the floor function.
If $\rank J_0(M_2)(\Q) = 0$, then $M_2 \le 180 \le 180^{r/2}$, and thus $N \le 180^{r+1}$.
If $\rank J_0(M_2)(\Q) \ne 0$, then $\rank J_0(M_1)(\Q) \le -1 + r/2$ and hence $M_1 \le 180^{\lfloor r/2 \rfloor}$,
which shows that $N \le 180^{r+1}$.

Next assume that $N = p^e$ for a prime $p$ and for an integer $e \ge 2$.
In this case by \cref{lem:strictly<} \eqref{lem:strictly<:item1} the rank of $J_0(p)(\Q)$ does not exceed $r/e$.
If $r/e < 1$, then $\rank J_0(p)(\Q) = 0$, and hence $p \le 71$.
On the other hand by \cref{lem:strictly<} \eqref{lem:strictly<:item3}
we have $\rank J_0(p^{e-1})(\Q) \le r-1$, which implies $p^{e-1} \le 180^r$
by the induction hypothesis.
Thus in this case $N \le 180^{r+1}$.
If $r/e \ge 1$ then we have shown that in this case we obtain $p \le 180^{\lfloor r/e \rfloor}$ in the argument above.
Hence $N \le 180^r$.
This completes the proof.
\end{proof}

Note that, although the statements in this section treat the higher rank,
these, as we have remarked in preliminaries, assume only the converse of Kato's theorem \cite[Corollary 14.3]{Kato},
but not require the full strength of Birch--Swinnerton-Dyer conjecture.

\begin{acknowledgements}
I would like to thank my supervisor Takeshi Saito for his kind and valuable suggestions.
I also thank Yoichi Mieda for pointing out an error in an earlier version on the reference to Kato's theorem \cite{Kato}.
\end{acknowledgements}

\begin{bibdiv}
\begin{biblist}

\bib{AABCCKW}{article}{
   author={Adžaga, Nikola},
   author={Arul, Vishal},
   author={Beneish, Lea},
   author={Chen, Mingjie},
   author={Chidambaram, Shiva},
   author={Keller, Timo},
   author={Wen, Boya},
   title = {Quadratic Chabauty for Atkin-Lehner Quotients of Modular Curves of Prime Level and Genus 4, 5, 6},
   date={2021},
   note={Preprint, \texttt{arXiv:2105.04811}},
}

\bib{AtkinLi}{article}{
   author={Atkin, A. O. L.},
   author={Li, Wen Ch'ing Winnie},
   title={Twists of newforms and pseudo-eigenvalues of $W$-operators},
   journal={Invent. Math.},
   volume={48},
   date={1978},
   number={3},
   pages={221--243},
   issn={0020-9910},
   review={\MR{508986}},
   doi={10.1007/BF01390245},
}

\bib{AgasheStein}{article}{
   author={Agashe, Amod},
   author={Stein, William},
   title={Visible evidence for the Birch and Swinnerton-Dyer conjecture for
   modular abelian varieties of analytic rank zero},
   note={With an appendix by J. Cremona and B. Mazur},
   journal={Math. Comp.},
   volume={74},
   date={2005},
   number={249},
   pages={455--484},
   issn={0025-5718},
   review={\MR{2085902}},
   doi={10.1090/S0025-5718-04-01644-8},
}

\bib{Magma}{article}{
	author={Bosma, W.},
	author={Cannon, J.},
	author={Playoust, C.},
	title={The Magma algebra system. I. The user language},
	note={Computational algebra and number theory (London, 1993)},
	journal={J. Symbolic Comput.},
	volume={24},
	date={1997},
	pages={235--265}
}

\bib{DEvHMZB}{article}{
   author={Derickx, Maarten},
   author={Etropolski, Anastassia},
   author={van Hoeij, Mark},
   author={Morrow, Jackson S.},
   author={Zureick-Brown, David},
   title={Sporadic cubic torsion},
   journal={Algebra Number Theory},
   volume={15},
   date={2021},
   number={7},
   pages={1837--1864},
   issn={1937-0652},
   review={\MR{4333666}},
   doi={10.2140/ant.2021.15.1837},
}

\bib{Faltings}{article}{
   author={Faltings, G.},
   title={Endlichkeitss\"{a}tze f\"{u}r abelsche Variet\"{a}ten \"{u}ber Zahlk\"{o}rpern},
   language={German},
   journal={Invent. Math.},
   volume={73},
   date={1983},
   number={3},
   pages={349--366},
   issn={0020-9910},
   review={\MR{718935}},
   doi={10.1007/BF01388432},
}

\bib{JeonKim}{article}{
   author={Jeon, Daeyeol},
   author={Kim, Chang Heon},
   title={Bielliptic modular curves $X_1(M,N)$},
   journal={Manuscripta Math.},
   volume={118},
   date={2005},
   number={4},
   pages={455--466},
   issn={0025-2611},
   review={\MR{2190107}},
   doi={10.1007/s00229-005-0595-9},
}

\bib{Kato}{article}{
   author={Kato, Kazuya},
   title={$p$-adic Hodge theory and values of zeta functions of modular
   forms},
   language={English, with English and French summaries},
   note={Cohomologies $p$-adiques et applications arithm\'{e}tiques. III},
   journal={Ast\'{e}risque},
   number={295},
   date={2004},
   pages={ix, 117--290},
   issn={0303-1179},
   review={\MR{2104361}},
}

\bib{Katz}{article}{
   author={Katz, Nicholas M.},
   title={Galois properties of torsion points on abelian varieties},
   journal={Invent. Math.},
   volume={62},
   date={1981},
   number={3},
   pages={481--502},
   issn={0020-9910},
   review={\MR{604840}},
   doi={10.1007/BF01394256},
}

\bib{Kamienny}{article}{
   author={Kamienny, S.},
   title={Torsion points on elliptic curves and $q$-coefficients of modular
   forms},
   journal={Invent. Math.},
   volume={109},
   date={1992},
   number={2},
   pages={221--229},
   issn={0020-9910},
   review={\MR{1172689}},
   doi={10.1007/BF01232025},
}

\bib{KenkuMomose}{article}{
   author={Kenku, M. A.},
   author={Momose, F.},
   title={Torsion points on elliptic curves defined over quadratic fields},
   journal={Nagoya Math. J.},
   volume={109},
   date={1988},
   pages={125--149},
   issn={0027-7630},
   review={\MR{931956}},
   doi={10.1017/S0027763000002816},
}

\bib{Kolyvagin}{article}{
   author={Kolyvagin, V. A.},
   title={Euler systems},
   conference={
      title={The Grothendieck Festschrift, Vol. II},
   },
   book={
      series={Progr. Math.},
      volume={87},
      publisher={Birkh\"{a}user Boston, Boston, MA},
   },
   date={1990},
   pages={435--483},
   review={\MR{1106906}},
}

\bib{LMFDB}{misc}{
  shorthand    = {LMFDB},
  author       = {The {LMFDB Collaboration}},
  title        = {The {L}-functions and modular forms database},
  publisher = {http://www.lmfdb.org},
  date         = {2022},
  note         = {[Online; accessed 29 March 2022]},
   label={LMFDB}
}

\bib{Mazur}{article}{
   author={Mazur, B.},
   title={Modular curves and the Eisenstein ideal},
   note={With an appendix by Mazur and M. Rapoport},
   journal={Inst. Hautes \'{E}tudes Sci. Publ. Math.},
   number={47},
   date={1977},
   pages={33--186 (1978)},
   issn={0073-8301},
   review={\MR{488287}},
}

\bib{Ribet}{article}{
   author={Ribet, Kenneth A.},
   title={Twists of modular forms and endomorphisms of abelian varieties},
   journal={Math. Ann.},
   volume={253},
   date={1980},
   number={1},
   pages={43--62},
   issn={0025-5831},
   review={\MR{594532}},
   doi={10.1007/BF01457819},
}

\bib{Shimura}{article}{
   author={Shimura, Goro},
   title={On the factors of the jacobian variety of a modular function
   field},
   journal={J. Math. Soc. Japan},
   volume={25},
   date={1973},
   pages={523--544},
   issn={0025-5645},
   review={\MR{318162}},
   doi={10.2969/jmsj/02530523},
}

\end{biblist}
\end{bibdiv}

\end{document}